\newtheorem{theorem}{Theorem}[section]
\newtheorem{lemma}[theorem]{Lemma}
\newtheorem{proposition}[theorem]{Proposition}
\newtheorem{corollary}[theorem]{Corollary}
\theoremstyle{definition}
\newtheorem{definition}[theorem]{Definition}
\newtheorem{remark}[theorem]{Remark}
\newtheorem*{example}{Example}
\numberwithin{equation}{section}
\def \beq{ \begin{equation} }
\def \eeq{\end{equation}}
\def \pd{\partial}
\def \R{\mathbb{R}}
\author{Manuele Santoprete }
\address{Department of Mathematics\\
Wilfrid Laurier University\\
75 University Avenue West\\
Waterloo, Ontario, Canada, N2L 3C5 }
\email{msantopr@wlu.ca}
\author{J\"urgen Scheurle}
\address{Zentrum Mathematik \\
TU M\"unchen\\
Boltzmannstr. 3, 85747 Garching, Germany}
\email{scheurle@ma.tum.de}
\author{Sebastian Walcher}
\address{ Lehrstuhl A f\"ur Mathematik\\
RWTH Aachen\\
52056 Aachen\\ Germany}
\email{walcher@matha.rwth-aachen.de}
\keywords{Hamiltonian systems with symmetry, symmetries, non-compact symmetry groups, singular reduction}
\subjclass{Primary 37J15, Secondary 70H33, 70F99, 37C80, 34C14,  20G20}
\begin{document}
\title[Motion in a symmetric potential on the hyperbolic plane]{Motion in a symmetric potential \\on the hyperbolic plane}

\begin{abstract}
We study the motion of a particle in the hyperbolic plane (embedded in Minkowski space), under the action of a potential that depends only on one variable.  This problem is  the analogous to the spherical pendulum in a unidirectional force field. However, for the discussion of the hyperbolic plane one has to distinguish three inequivalent cases, depending on the direction of the force field. Symmetry reduction, with respect to groups that are not necessarily compact or even reductive, is carried out by way of Poisson varieties and Hilbert maps. For each case the dynamics is discussed, with special attention to linear potentials.
 
\end{abstract}
\maketitle

\section{Introduction}

We discuss the motion of a particle on the hyperbolic plane under the influence of a unidirectional conservative force field. Among the various realizations of the hyperbolic plane, the embedding as a hyperboloid shell in three-dimensional (Minkowski) space seems most convenient for the discussion.\\
The problem has some similarities with the spherical pendulum in a gravitational field, and our work is motivated by the detailed discussion of the spherical pendulum in Cushman and Bates \cite{Cushman}, i.e.  of the motion of a particle on a sphere in Euclidean space, under the influence of a linear potential. But there are some aspects which render the present problem more complicated. In particular, the direction of the axis for the potential is relevant, which leads to three inequivalent cases, with different types of isotropy groups, including unipotent ones. For the symmetry reduction it seems therefore more appropriate and convenient to use an alternative approach - via Poisson varieties - which works in a quite uncomplicated manner for the various cases of our ``hyperbolic pendulum". The approach, which has been used before in special cases (see \cite{WalcherNF}), is presented generally in the Appendix of the present paper. Its range of applicability covers algebraic group actions on algebraic varieties, and it is relatively uncomplicated from a technical and computational perspective. Moreover, such group actions occur frequently enough in applications to justify a general presentation of the underlying theory. The specific applications to particle motion on the hyperbolic plane will be discussed in detail.

\section{Preliminaries}
We first review some facts about the hyperbolic plane. The non-degenerate symmetric bilinear form $\langle\cdot,\cdot \rangle_L$ on  $\mathbb R^{3}$, given by
\[
\langle u, v\rangle_L =u_1v_1+u_2v_2-u_3v_3
\]
for $u=(u_1,u_2,u_3)$, and $v=(v_1,v_2,v_3)$,
is called the Lorentz (or Minkowski) pseudo inner product on $\mathbb R^{3}$. As usual we denote this pseudo inner product space as $(\mathbb R^{3},\langle\cdot,\cdot\rangle_L)$ by
$\mathbb R^{2,1}$. 
The Lorentz cross product on $\mathbb R^{2,1}$  is defined by 
\[
u\times_L v=(u_2 v_3-u_3 v_2, u_3 v_1-u_1 v_3, u_2 v_1-u_1 v_2).
\]
Moreover we define  the  Lorentz gradient for a scalar field $f$ as

\[\nabla_L f=\left(\frac{\pd f}{\pd x_1},\frac{\pd f}{\pd x_2},-\frac{\pd f}{\pd x_3}\right)^{\rm tr},\]
thus we have the identity
\begin{equation}\label{lograd}
\langle\nabla_Lf(x),\,v\rangle_L=Df(x) v
\end{equation}

 The Lorentz pseudo-inner product induces a  pseudo-Riemannian metric on $\R^{2,1}$.
Pulling back the canonical symplectic 2-form on the cotangent bundle $T^*\R^{2,1}$ (see Cushman and Bates \cite{Cushman} Appendix A, Section 2)
one obtains the symplectic form 
\[
\omega_L((u,\alpha),(v,\beta))=\langle \beta,u \rangle_L-\langle \alpha,v \rangle_L
\]
on the tangent bundle $T\R^{2,1}$, where $u,v,\alpha,\beta\in\R^{2,1}$.

From the definition of the symplectic form $\omega_L$ it follows that the  Hamiltonian vector field of a function $f$ is given by  $X_f=(\nabla_L^y f,-\nabla_L^x f)$. Hence the Poisson bracket associated with $\omega_L$ is
\beq\label{fullpoiss}
\{f,g\}(z):=\omega_L(z)(X_f(z),X_g(z))=\langle\nabla_L^x f,\nabla_L^y g\rangle_L-\langle\nabla_L^xg,\nabla_L^yf\rangle_L
\eeq
with $z=(x,y)$.

The unconstrained Hamiltonian system  $(H, T\mathbb{R}^{2,1},\omega_L)$ with 
\beq
H=\frac 1 2\langle y,y \rangle_L+U(x)
\label{Hamiltonian}
\eeq
describes a particle that moves in $\mathbb{R}^{2,1}$ under the influence of the potential  $U(x)$.
The Hamiltonian $H$ determines the Hamiltonian vector field whose integral curves satisfy
\beq
\begin{split}
 \frac{dx}{dt}&=\nabla_L^y H=\{x,H\}=y\\
\frac{dy}{dt}&=-\nabla_L^x H=\{y,H\}=-\nabla_L^x U(x).\\
\end{split}
\eeq
We now constrain the particle motion to the hyperbolic plane 
\[
\mathbb{H}^2=\{x\in \mathbb{R}^{2,1}:\langle x,x\rangle_L=-1,\mbox{ and } x_3>0\},
\]
with tangent bundle 
\[
T\mathbb{H}^2=\{(x,y)\in T\mathbb{R}^{2,1}|\langle x,x\rangle_L +1=0,~ x_3>0, \mbox{ and } \langle x, y\rangle_L=0\}.
\]
To account for the constraint and to compute the constrained vector field $H_{T\mathbb H^2}$
with respect to the symplectic form $\omega_L$,  we introduce the Dirac-Poisson bracket $\{\cdot,\cdot\}^*$
on $T\mathbb R^{2,1}$ (see Cushman and Bates \cite{Cushman}, Appendix A, Section 4).

\begin{lemma}\label{dirac}
 The Dirac-Poisson structure $\{\cdot,\cdot\}^*|_{T\mathbb H^2}$ is given by

\begin{tabular}{c|cccccc}
&$x_1$&$x_2$&$x_3$&$y_1$&$y_2$&$y_3$\\
\hline 
$x_1$&0&0&0&$1+x_1^2$&$x_1x_2$&$x_1x_3$\\ 
$x_2$& &0&0&$x_1x_2$&$1+x_2^2$&$x_2x_3$\\
$x_3$& & &0&$x_1x_3$&$x_2x_3$&$-1+x_3^2$\\ 
$y_1$& & & &0&$x_1y_2-x_2y_1$&$x_1y_3-y_1x_3$\\ 
$y_2$& & & & &0&$x_2y_3-y_2x_3$\\ 
$y_3$& & & & & &0\\ 
\end{tabular} 

with Casimirs $c_1(x,y)=\langle x, x\rangle_L+1$ and $c_2(x,y)=\langle x,y\rangle_L$.
\end{lemma}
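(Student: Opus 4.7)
\emph{Proof plan.} The strategy is to apply the Dirac bracket construction for the two constraints $c_1(x,y)=\langle x,x\rangle_L+1$ and $c_2(x,y)=\langle x,y\rangle_L$ defining $T\mathbb{H}^2$, and then evaluate the resulting bracket on each pair of coordinate functions. Recall that if $C_{ij}=\{c_i,c_j\}$ denotes the matrix of Poisson brackets of the constraints (computed in the ambient bracket (\ref{fullpoiss}) on $T\mathbb{R}^{2,1}$), then the Dirac-Poisson bracket is
\[
\{f,g\}^* \;=\; \{f,g\}\;-\;\sum_{i,j=1}^{2}\{f,c_i\}\,(C^{-1})_{ij}\,\{c_j,g\}.
\]

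First I would determine the scalar $\{c_1,c_2\}$. Using the Lorentz gradients $\nabla_L^x c_1=2(x_1,x_2,x_3)^{\mathrm{tr}}$, $\nabla_L^y c_1=0$, $\nabla_L^x c_2=(y_1,y_2,y_3)^{\mathrm{tr}}$, $\nabla_L^y c_2=(x_1,x_2,x_3)^{\mathrm{tr}}$, together with (\ref{fullpoiss}) and the identity (\ref{lograd}), one obtains $\{c_1,c_2\}=2\langle x,x\rangle_L=-2$ on $T\mathbb{H}^2$. Thus $C=\bigl(\begin{smallmatrix}0&-2\\ 2&0\end{smallmatrix}\bigr)$ is invertible with $C^{-1}=\tfrac12\bigl(\begin{smallmatrix}0&1\\ -1&0\end{smallmatrix}\bigr)$, and the Dirac formula simplifies to
\[
\{f,g\}^*=\{f,g\}-\tfrac{1}{2}\{f,c_1\}\{c_2,g\}+\tfrac{1}{2}\{f,c_2\}\{c_1,g\}.
\]

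Next I would compute the auxiliary brackets of the coordinate functions with $c_1$ and $c_2$. Exploiting (\ref{lograd}), one finds $\{x_i,c_1\}=0$ (both arguments depend only on $x$), $\{x_i,c_2\}=x_i$, $\{c_1,y_j\}=2x_j$, and $\{c_2,y_j\}=y_j$. The ambient brackets among the coordinates are $\{x_i,x_j\}=\{y_i,y_j\}=0$, while $\{x_i,y_j\}$ equals the $(i,j)$-entry of $\mathrm{diag}(1,1,-1)$. Substituting into the simplified Dirac formula yields the uniform expressions
\[
\{x_i,x_j\}^*=0,\qquad \{x_i,y_j\}^*=\{x_i,y_j\}+x_ix_j,\qquad \{y_i,y_j\}^*=x_iy_j-x_jy_i,
\]
and reading these off for each index pair reproduces every entry of the table.

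Finally, the Casimir property of $c_1$ and $c_2$ is automatic from the Dirac construction: for any $f$ and any $k\in\{1,2\}$,
\[
\{c_k,f\}^*=\{c_k,f\}-\sum_{i,j}\{c_k,c_i\}(C^{-1})_{ij}\{c_j,f\}=\{c_k,f\}-\sum_{j}\delta_{kj}\{c_j,f\}=0,
\]
and this identity descends to the restriction $\{\cdot,\cdot\}^*|_{T\mathbb{H}^2}$. The only genuine obstacle in carrying out the proof is careful bookkeeping of the Lorentz sign flips in the third coordinate introduced by $\nabla_L$ and by $\langle\cdot,\cdot\rangle_L$; there is no conceptual difficulty.
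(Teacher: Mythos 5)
Your proposal is correct and follows essentially the same route as the paper: both apply the Dirac bracket formula for the two constraints $c_1,c_2$, compute the (inverse of the) matrix of constraint brackets $\{c_i,c_j\}=\pm 2\langle x,x\rangle_L$, and then evaluate on coordinate pairs; your uniform formulas $\{x_i,y_j\}^*=\{x_i,y_j\}+x_ix_j$ and $\{y_i,y_j\}^*=x_iy_j-x_jy_i$ correctly reproduce the table, and the Casimir argument is the standard one. The only cosmetic difference is that you substitute $\langle x,x\rangle_L=-1$ before inverting, whereas the paper keeps $C=\tfrac{1}{2\langle x,x\rangle_L}\bigl(\begin{smallmatrix}0&-1\\ 1&0\end{smallmatrix}\bigr)$ general and restricts afterwards; both yield the same bracket on $T\mathbb H^2$.
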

\begin{proof}
 The phase space $T\mathbb H^2$ is given as a subset of $T\mathbb R^{2,1}$ by the constraints
\[
c_1(x,y)=\langle x, x\rangle_L+1=0 \mbox{ and } c_2(x,y)=\langle x,y\rangle_L=0.
\]
According to \cite{Cushman} the Dirac-Poisson brackets are given by the relation
\[
\{F,G\}^*=\{F,G\}+\sum_{i,j} C_{ij}\{F,c_i\}\{G,c_j\},
\]
where $C_{ij}$ are the entries of the inverse  of the matrix $\left(\{c_i,c_j\}\right)_{i,j}$. In our case 
\beq
C=\frac{1}{2\langle x,x\rangle_L}\begin{pmatrix}
0&-1\\
1&0                                 
                            \end{pmatrix}
\eeq
 The assertion now follows by straightforward computations.   
\end{proof}
Computing the equations of motion with respect to the Dirac-Poisson bracket one obtains:
\begin{lemma}
 The constrained equations on $T\mathbb H^2$ are given by
\beq
\begin{split}
 \frac{dx}{dt}&=y\\
\frac{dy}{dt}&=-\nabla_L^x U+x(\langle y,y\rangle_L-\langle x,\nabla_L^x U\rangle_L)
\end{split}
\label{motion}
\eeq
\end{lemma}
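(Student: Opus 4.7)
The plan is to apply the Dirac--Poisson bracket of Lemma \ref{dirac} directly: for any smooth function $F$ on $T\mathbb R^{2,1}$ the evolution on $T\mathbb H^2$ is $\dot F=\{F,H\}^*$, and the claim reduces to computing $\{x_i,H\}^*$ and $\{y_i,H\}^*$ for $i=1,2,3$. I would use the bilinearity of the bracket and the table of Lemma \ref{dirac}, and then simplify using the fact that the Casimirs $c_1=\langle x,x\rangle_L+1$ and $c_2=\langle x,y\rangle_L$ vanish identically on the constraint surface.

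For $\dot x_i=\{x_i,H\}^*$ I would note that all entries $\{x_i,x_j\}^*$ are zero, so only the kinetic part $\tfrac12\langle y,y\rangle_L$ contributes. Reading off the first three rows of the table and taking the correct partials (being careful with $\pd_{y_3}(\tfrac12\langle y,y\rangle_L)=-y_3$ due to the Lorentz signature), each component collapses to $y_i+x_i\,\langle x,y\rangle_L$, and the last term vanishes on $T\mathbb H^2$. This yields $\dot x=y$.

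For $\dot y_i=\{y_i,H\}^*$ there are two types of contributions. The $\{y_i,x_j\}^*$ entries multiply the partials of $U$ and, after taking into account the sign convention of $\nabla_L^x U$ in the third slot, assemble into $-\pd_i U$ (or its Lorentz analogue in the third component) minus a term proportional to $x_i\,\langle x,\nabla_L^x U\rangle_L$. The $\{y_i,y_j\}^*$ entries multiply the partials of $\tfrac12\langle y,y\rangle_L$ and produce a quadratic expression in $y$; this expression, after grouping, equals $x_i\,\langle y,y\rangle_L$ modulo a multiple of $\langle x,y\rangle_L$, which again vanishes on $T\mathbb H^2$. Combining the two contributions gives precisely the stated formula for $\dot y$.

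The whole argument is a direct bookkeeping exercise; the only pitfall, and the point at which I would be most careful, is tracking the minus signs induced by the Lorentz gradient $\nabla_L^x$ in the third coordinate so that the algebraic identities collapse to multiples of $c_1$ and $c_2$. No conceptual input beyond Lemma \ref{dirac} is required.
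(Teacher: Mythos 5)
Your proposal is correct and is exactly the computation the paper intends: the lemma is stated immediately after ``Computing the equations of motion with respect to the Dirac--Poisson bracket one obtains,'' with no further argument, and your bookkeeping (the $\dot x_i$ terms collapsing to $y_i+x_i\langle x,y\rangle_L$ and the $\dot y_i$ terms to $-(\nabla_L^x U)_i + x_i(\langle y,y\rangle_L-\langle x,\nabla_L^x U\rangle_L)$ modulo multiples of the Casimirs) checks out in all three components. No discrepancy with the paper's approach.
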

\begin{remark} The stationary points $(x^*,y^*)$ of \eqref{motion} are given by
$y^*=0$ and $\nabla_L^x U(x^*)=0$, as can be readily verified.

\end{remark}

Next we recall some facts about linear automorphisms of the hyperbolic plane. For details and proofs we refer to  \cite{Dillen} and \cite{Nomizu}. A linear map $A\in GL(\mathbb{R}^{2,1})  $ is called
orthogonal with respect to the pseudo inner product if $\langle Au,Av\rangle_L=\langle u, v\rangle_L$ for all
$u,v \in\mathbb R^{2,1}$. 
Let $O(2,1)$ denote the set of these orthogonal linear maps. For $A\in O(2,1)$ one has $(\det A)^2=1$, so $\det A=\pm 1$. One furthermore defines the special orthogonal group
$SO(2,1)=\{A\in O(2,1)|\det A=1\}$,
and finally the Lorentz group  ${\rm Lor}(2,1)$ of $\mathbb R^{2,1}$ to be the (closed) subgroup of $SO(2,1)$ preserving the hyperbolic plane $\mathbb H^2$. It can be shown that  ${\rm Lor}(2,1)$ is connected.
The action of $O(2,1)$ on $\mathbb{R}^{2,1}$ extends to a ``diagonal" action on the tangent bundle $T(\mathbb{R}^{2,1})$ via the representation
\[
A\mapsto\widetilde A:= \left(\begin{array}{cc} A&0\\
                                                                       0&A\end{array}\right).
\]
 
There are three types of 1-parameter subgroups of ${\rm Lor}(2,1)$ which leave a line pointwise fixed. We consider representatives of these types in the following.

The subgroup of {\it elliptic rotations} is given by
$$
G_e=\left\{
\begin{pmatrix}
\cos\theta & -\sin\theta & 0 \\ 
\sin\theta & \cos\theta & 0 \\ 
0 & 0 & 1 
\end{pmatrix},\quad \theta\in [0,2\pi)\right\}.
$$
These transformations leave the $x_3$-axis pointwise invariant.

The subgroup of {\it hyperbolic rotations} is given by
$$
G_h=\left\{\begin{pmatrix}
1 & 0 & 0 \\ 
0 & \cosh s & \sinh s \\ 
0 & \sinh s & \cosh s 
\end{pmatrix}\quad   -\infty<s<\infty 
  \right\}.
$$
These transformations leave the $x_1$-axis pointwise invariant.

The subgroup of {\it parabolic rotations} is given by 
$$
G_p=\left\{\begin{pmatrix}
1 & -t & t \\ 
t & 1-t^2/2 & t^2/2 \\ 
t & -t^2/2 & 1+t^2/2 
\end{pmatrix}, \quad -\infty<t<\infty
\right\}.
$$
These transformations leave the line $x_1=0$, $x_2=x_3$ pointwise invariant.

\medskip
\noindent
The {\it Principal Axis Theorem} for  ${\rm Lor}(2,1)$ states that every  transformation which leaves a line pointwise fixed  is conjugate to an element of either $G_e$, $G_h$ or $G_p$.

The following result shows that the transformations in $O(2,1)$, resp. ${\rm Lor}(2,1)$, induce Poisson maps on $T\mathbb R^{2,1}$, resp. $T\mathbb H^2$.
\begin{lemma}
{\em (a)} Consider $T\R^{2,1}$ with the symplectic form $\omega_L$, the corresponding Poisson brackets $\{,\}$, 
and the action of the orthogonal group $O(2,1)$.  For all scalar fields $f$ and $g$ and all $A\in O(2,1)$ one has
\[
\{f\circ\widetilde A,g\circ\widetilde A\}=\{f,g\}\circ\widetilde A.
\]
In particular, if $f$ and $g$ are invariant with respect to $O(2,1)$ then so is $\left\{f,g\right\}$.

\noindent{\em(b)} Furthermore, consider the constraint to the cosymplectic submanifold $T\mathbb H^2$, with the Dirac-Poisson bracket. Then for all scalar fields $f$ and $g$ and all $A\in{\rm Lor}(2,1)$ one has
\[
\{f\circ\widetilde A, g\circ\widetilde A\}^*=\{f,g\}^*\circ\widetilde A.
\]
In particular, if $f$ and $g$ are invariant with respect to ${\rm Lor}(2,1)$ then so is $\left\{f,g\right\}^*$.
\end{lemma}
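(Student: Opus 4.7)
The plan is to derive part (b) from part (a), so the real work is in part (a); for that I would present the symplectic argument and also sketch the direct computational one.

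For part (a), the cleanest route is to observe that $\widetilde A$ is a symplectomorphism of $(T\R^{2,1},\omega_L)$. Indeed, for $A\in O(2,1)$ the defining relation $\langle Au,Av\rangle_L=\langle u,v\rangle_L$ gives
\[
\omega_L(\widetilde A(u,\alpha),\widetilde A(v,\beta))=\langle A\beta,Au\rangle_L-\langle A\alpha,Av\rangle_L=\omega_L((u,\alpha),(v,\beta)),
\]
so $\widetilde A^{*}\omega_L=\omega_L$, and any symplectomorphism is automatically a Poisson map. Alternatively one can verify the identity by hand: the $\langle\cdot,\cdot\rangle_L$-adjoint of $A\in O(2,1)$ is $A^{-1}$, so the defining relation \eqref{lograd} forces $\nabla_L^x(f\circ\widetilde A)(z)=A^{-1}\bigl(\nabla_L^x f\bigr)(\widetilde A z)$ and similarly for $\nabla_L^y$. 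Substituting these into the formula \eqref{fullpoiss} and applying $\langle A\cdot,A\cdot\rangle_L=\langle\cdot,\cdot\rangle_L$ once more makes the $A^{-1}$'s disappear and yields $\{f,g\}\circ\widetilde A$.

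For part (b) I would invoke the explicit form of the Dirac bracket from Lemma \ref{dirac}, namely $\{F,G\}^{*}=\{F,G\}+\sum_{i,j}C_{ij}\{F,c_i\}\{G,c_j\}$. Two things must be checked. First, both Casimirs are $\mathrm{Lor}(2,1)$-invariant: $c_1\circ\widetilde A=\langle Ax,Ax\rangle_L+1=c_1$ and $c_2\circ\widetilde A=\langle Ax,Ay\rangle_L=c_2$. (This is also what guarantees that $\widetilde A$ restricts to a diffeomorphism of $T\mathbb H^2$, which is why we pass from $O(2,1)$ to $\mathrm{Lor}(2,1)$ here.) Second, the coefficients $C_{ij}$ depend only on $\langle x,x\rangle_L$, hence are $\widetilde A$-invariant as well (in fact constant on $T\mathbb H^2$). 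Using part (a) and the invariance of the $c_i$ one has $\{F\circ\widetilde A,c_i\}=\{F\circ\widetilde A,c_i\circ\widetilde A\}=\{F,c_i\}\circ\widetilde A$, and similarly for $G$, so every term in the sum is simply the corresponding term of $\{F,G\}^{*}$ precomposed with $\widetilde A$. Factoring that composition out of the whole expression gives $\{F\circ\widetilde A,G\circ\widetilde A\}^{*}=\{F,G\}^{*}\circ\widetilde A$, as required.

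There is no deep obstacle here; the only subtlety worth flagging is the justification of $\nabla_L^x(f\circ\widetilde A)=A^{-1}(\nabla_L^xf)\circ\widetilde A$, which mixes the ordinary Jacobian $Df\cdot A$ produced by the chain rule with the Lorentzian \emph{musical isomorphism} that defines $\nabla_L$. Once that step is in place, everything else is bookkeeping. The final ``in particular'' clauses in both (a) and (b) are immediate: if $f\circ\widetilde A=f$ and $g\circ\widetilde A=g$, the established identities collapse to $\{f,g\}=\{f,g\}\circ\widetilde A$ and $\{f,g\}^{*}=\{f,g\}^{*}\circ\widetilde A$, which is exactly invariance of the bracket.
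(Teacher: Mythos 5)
Your proof is correct, and its computational core coincides with the paper's: the identity $\nabla_L(q\circ B)(x)=B^{*}(\nabla_L q)(Bx)$ obtained from \eqref{lograd} and the chain rule, followed by substitution into \eqref{fullpoiss} and cancellation via $\langle A\cdot,A\cdot\rangle_L=\langle\cdot,\cdot\rangle_L$ (the paper writes $B^{*}$ for the Lorentz adjoint where you write $A^{-1}$; these agree for $A\in O(2,1)$, as you note). The one genuine variation is that you lead with the observation that $\widetilde A$ preserves $\omega_L$ and hence is a Poisson map; this is a clean conceptual shortcut that the paper does not use, and it buys you part (a) without touching gradients at all, at the cost of invoking the general fact that symplectomorphisms are Poisson maps. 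For part (b) the paper merely asserts that the claim ``follows'' from (a) and the invariance of $c_1,c_2$; the steps you supply --- invariance of the Casimirs, invariance of the coefficients $C_{ij}$ because they depend only on $\langle x,x\rangle_L$, and the term-by-term identity $\{F\circ\widetilde A,c_i\}=\{F,c_i\}\circ\widetilde A$ --- are exactly the elided content, and your remark that passing from $O(2,1)$ to ${\rm Lor}(2,1)$ is what guarantees $\widetilde A$ preserves $T\mathbb H^2$ is a worthwhile point the paper leaves implicit.
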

\begin{proof}
(i) Let $B\in O(2,1)$. By the chain rule and \eqref{lograd} we have for every scalar field $q$ on $\mathbb R^{2,1}$ the identities
\[
\begin{array}{rcl}
\langle\left(\nabla_L(q\circ B)\right)(x),\,v\rangle_L&=& Dq(Bx)Bv\\
   &=&  \langle\left(\nabla_L(q)\right)(Bx),\,Bv\rangle_L\\
    &=&  \langle B^*\left(\nabla_L(q)\right)(Bx),\,v\rangle_L,
\end{array}
\]
with $B^*$ the adjoint of $B$.

\noindent(ii) Now let $f$ and $g$ be scalar fields on $T\mathbb R^{2,1}$. Since $A\in O(2,1)$ acts diagonally, using the result from (i) for partial derivatives yields
\[
\begin{array}{rcl}
\langle\nabla_L^x(f\circ\widetilde A),\nabla_L^y(g\circ\widetilde A)\rangle_L&=& 
\langle A^*\nabla_L^{Ax}f,A^*\nabla_L^{Ay}g\rangle_L\\
 &=& \langle\nabla_L^{Ax}f,\nabla_L^{Ay}g\rangle_L\\
 &=& \langle\nabla_L^xf,\nabla_L^yg\rangle_L\circ \widetilde A,\\
\end{array}
\]
where the invariance of the pseudo inner product has been used. Using the same line of argument once more, one sees from \eqref{fullpoiss} that the property asserted in (a) holds.

\noindent(iii) By part (a), and since the constraints $c_1$ and $c_2$ are given by invariants of ${\rm Lor}(2,1)$, the invariance property (b) follows for the Dirac-Poisson brackets.
\end{proof}

\section{Axially symmetric potentials}
In this section we discuss motion on $\mathbb H^2$ under the influence of an axisymmetric potential, including symmetry reduction and reconstruction of invariant sets from the reduced systems. By the Principal Axis Theorem for the Lorenz group it is sufficient to discuss the three subgroups $G_e,\,G_h,\,G_p$ which were introduced in the previous section. We will employ a reduction method which differs from the one presented in Cushman and Bates \cite{Cushman}, Appendix B. Our approach to reduction is based on the observation that the symmetry is given by the regular action of a linear algebraic group, rather than viewing this as a smooth Lie group action as discussed in \cite{Cushman}. Using some standard Algebraic Geometry, we obtain a reducing map and a ``reduced space" with relatively little technical expenditure. The approach, which is outlined in some detail in the Appendix, works via construction of an induced Poisson bracket on an algebraic variety, the variety being defined by the relations between the entries of a Hilbert map for the subgroup. Initially we will consider only polynomial (or rational) vector fields, and address the extension to the analytic or smooth case later. According to Proposition \ref{poiredprop} the reduction starts from an algebraic (Poisson) variety, hence we will consider vector fields on the variety
\[
V:=\left\{ (x,y)\in T\mathbb{R}^{2,1}|\langle x,x\rangle_L +1= 0 \mbox{ and } \langle x, y\rangle_L=0\right\}\supseteq \mathbb H^2,
\]
with no loss of generality.

The invariant algebra $\mathbb R[V]^G$ is finitely generated when the group $G$ is compact or, more generally, reductive (i.e.,  every finite dimensional linear representation is completely reducible), according to a famous result from invariant theory (see e.g. Springer \cite{Springer} for affine $n$-space, and Haboush \cite{Haboush} for general affine varieties).
In the elliptic case we have a compact symmetry group, and our reduction will turn out to be essentially the singular reduction described in \cite{Cushman}. In the hyperbolic case the symmetry group is no longer compact but still reductive. In the parabolic case one has to deal with a unipotent symmetry group. 

The computation of invariants in the first two cases is facilitated by some (well-known) auxiliary results. 
\begin{lemma}\label{torus}
Let $\alpha\in\mathbb C^*$ and consider the one-parameter group $G$ with infinitesimal generator 
\[
{\rm diag}(\alpha, -\alpha,0,\alpha,-\alpha,0)
\]
acting on $\mathbb C^6$ with coordinates $z_1,\ldots,z_6$.
The invariant algebra of $G$ is generated by 
\[
\psi_1=z_1z_2,\,\psi_2=z_3,\,\psi_3=z_4z_5,\,\psi_4=z_6,\,\psi_5=z_1z_5,\,\psi_6=z_2z_4.
\]
One has the relation $\psi_1\psi_3-\psi_5\psi_6=0$ between the generators; all other relations are a consequence of this one.
\end{lemma}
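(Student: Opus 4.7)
The plan is to reduce the problem to a combinatorial question about weight-zero monomials, using that the relevant group is essentially an algebraic torus. The infinitesimal generator $\mathrm{diag}(\alpha,-\alpha,0,\alpha,-\alpha,0)$ is semisimple with integer weights $(1,-1,0,1,-1,0)$ (for $\alpha\in\mathbb{C}^*$), so the Zariski closure of $G$ in $GL_6(\mathbb{C})$ is the one-dimensional algebraic torus $T=\{\mathrm{diag}(\lambda,\lambda^{-1},1,\lambda,\lambda^{-1},1):\lambda\in\mathbb{C}^*\}$, which has the same invariant polynomials as $G$. Since torus representations decompose into weight spaces, $\mathbb{C}[z_1,\ldots,z_6]$ is $\mathbb{Z}$-graded by $\mathrm{wt}(z^a)=a_1-a_2+a_4-a_5$, and a polynomial is $T$-invariant if and only if each of its weight-homogeneous components is, equivalently, if and only if it is a $\mathbb{C}$-linear combination of monomials with $a_1+a_4=a_2+a_5$.

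The second step is to show that the semigroup $S\subset\mathbb{N}^6$ of such exponent vectors is generated by the six exponent vectors corresponding to $\psi_1,\ldots,\psi_6$. Given $a\in S$, the entries $a_3$ and $a_6$ contribute only factors $\psi_2^{a_3}$ and $\psi_4^{a_6}$, so it suffices to handle $a_3=a_6=0$; then one inducts on $a_1+a_4$. If $a_1>0$, then $a_2+a_5=a_1+a_4>0$, so at least one of $a_2,a_5$ is positive, and I subtract the exponent vector of $\psi_1$ or of $\psi_5$ respectively and stay in $S$; the case $a_4>0$ is symmetric, using $\psi_6$ or $\psi_3$. This reduces $a$ to zero, so $\mathbb{C}[z_1,\ldots,z_6]^G=\mathbb{C}[\psi_1,\ldots,\psi_6]$.

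For the relations among the $\psi_i$, direct substitution gives $\psi_1\psi_3=z_1z_2\cdot z_4z_5=z_1z_5\cdot z_2z_4=\psi_5\psi_6$. To see that this is the only relation (in the sense that it generates the full relation ideal), let $I\subset\mathbb{C}[u_1,\ldots,u_6]$ be the kernel of the surjection $u_i\mapsto\psi_i$. Then $I$ is prime, because $\mathbb{C}[u]/I$ embeds into the polynomial ring $\mathbb{C}[z_1,\ldots,z_6]$. Since $G$ is one-dimensional with generically one-dimensional orbits on $\mathbb{C}^6$, the categorical quotient has Krull dimension $5$, so $I$ has height one. On the other hand $u_1u_3-u_5u_6$ is a rank-$4$ quadric and hence irreducible, so the principal ideal $(u_1u_3-u_5u_6)$ is prime of height one. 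The inclusion $(u_1u_3-u_5u_6)\subseteq I$ between two primes of the same height forces equality.

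The main obstacle is the last paragraph, since the Hilbert-basis combinatorics is routine; however the primality-plus-dimension argument settles the uniqueness of the relation cleanly, avoiding any explicit Gr\"obner-basis computation.
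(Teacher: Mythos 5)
Your proof is correct, and for the generators it follows the same route as the paper: the paper's (two-line) sketch likewise reduces to the observation that invariants are spanned by monomials with $m_1-m_2+m_4-m_5=0$ and then says ``determine the nonnegative integer solutions,'' which is exactly the semigroup induction you carry out explicitly. Where you genuinely diverge is the claim that $\psi_1\psi_3-\psi_5\psi_6=0$ generates all relations: the paper leaves this entirely to the implicit toric combinatorics, whereas you give a complete commutative-algebra argument (the relation ideal $I$ is prime since $\mathbb{C}[u]/I$ embeds in a domain; it has height one by a transcendence-degree count; an irreducible rank-$4$ quadric generates a height-one prime contained in $I$; two primes of equal height that are nested coincide). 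This buys you a clean, computation-free proof of the completeness of the relation, at the cost of invoking dimension theory. One small point you could tighten: the step ``generic orbits are one-dimensional, hence the categorical quotient has dimension $5$'' silently identifies $\operatorname{Frac}(\mathbb{C}[z]^G)$ with the field of rational invariants; for this diagonal torus that identification does hold, but it is quicker and self-contained to note directly that $\psi_1,\psi_2,\psi_3,\psi_4,\psi_5$ are algebraically independent (their exponent vectors are linearly independent) while $\psi_6=\psi_1\psi_3/\psi_5$, so $\operatorname{trdeg}\mathbb{C}[\psi_1,\dots,\psi_6]=5$ and $\operatorname{ht}(I)=1$.
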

\begin{proof}[Sketch of proof] By e.g. \cite{WalcherNF}, Lemma 1.2 ff. every invariant is a linear combination of monomials $\prod \,z_i^{m_i}$ such that 
\[
\alpha\cdot (m_1-m_2)+0\cdot m_3+\alpha\cdot(m_4-m_5)+0\cdot m_6=0.
\]
The assertion follows by determining the nonnegative integer solutions of this equation.
\end{proof}

\begin{lemma}\label{panlem}  Let $G$ be a reductive group which acts on $\mathbb R^n$ or $\mathbb C^n$ and stabilizes the variety $V$. Then all elements of $\mathbb R[V]^G$ are restrictions of elements of $\mathbb R[x_1,\ldots,x_n]^G$ to $V$.
\end{lemma}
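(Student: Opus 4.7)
The plan is to exploit the defining property of reductive groups: exactness of the $G$-invariants functor on rational $G$-modules, or equivalently the existence of a $G$-equivariant Reynolds projection onto the invariants. Let $I(V)\subset\mathbb R[x_1,\ldots,x_n]$ denote the vanishing ideal of $V$ and let $\rho:\mathbb R[x_1,\ldots,x_n]\to\mathbb R[V]$ be the restriction map. Since $G$ stabilizes $V$, the ideal $I(V)$ is $G$-invariant, so $\rho$ is $G$-equivariant and fits into a short exact sequence of $G$-modules
\[
0\to I(V)\to \mathbb R[x_1,\ldots,x_n]\to \mathbb R[V]\to 0.
\]
The claim to be proven is exactly that $\rho$ remains surjective after passing to $G$-invariants, i.e.\ the induced map $\mathbb R[x_1,\ldots,x_n]^G\to \mathbb R[V]^G$ is onto.

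First, I would invoke the Reynolds operator $R:\mathbb R[x_1,\ldots,x_n]\to\mathbb R[x_1,\ldots,x_n]^G$ associated with a reductive group (constructed by averaging over a maximal compact subgroup in the real case, or as the projection onto the trivial isotypic component in general). Its defining properties are $G$-equivariance, $R$ being the identity on $\mathbb R[x_1,\ldots,x_n]^G$, and $R$-linearity over the invariant subring. The key observation is that because $I(V)$ is a $G$-invariant submodule, $R$ sends $I(V)$ into $I(V)\cap\mathbb R[x_1,\ldots,x_n]^G$. Consequently $R$ descends to a well-defined operator $R_V:\mathbb R[V]\to\mathbb R[V]^G$ making the square
\[
\begin{array}{ccc} \mathbb R[x_1,\ldots,x_n] & \xrightarrow{R} & \mathbb R[x_1,\ldots,x_n]^G \\ \downarrow\rho & & \downarrow\rho \\ \mathbb R[V] & \xrightarrow{R_V} & \mathbb R[V]^G \end{array}
\]
commute, with $R_V$ again a projection onto the invariants.

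With this in hand, the lemma follows immediately: given $f\in\mathbb R[V]^G$, choose any preimage $\tilde f\in\mathbb R[x_1,\ldots,x_n]$ under $\rho$ (possible because $\rho$ is surjective), and set $F:=R(\tilde f)$. Then $F\in\mathbb R[x_1,\ldots,x_n]^G$, and by the commutative square $\rho(F)=R_V(\rho(\tilde f))=R_V(f)=f$, so $F$ is the required invariant extension.

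The only real obstacle is justifying the existence of $R$ (and its compatibility with $G$-invariant ideals) for the reductive groups occurring in the applications; in the elliptic and parabolic cases the group is a real algebraic subgroup, while in the hyperbolic case the group is non-compact reductive. Rather than re-deriving this from scratch I would simply cite the standard result (for instance from \cite{Springer} in the affine case, together with \cite{Haboush}, which are already referenced in the text). One could alternatively argue directly on the short exact sequence by noting that reductivity yields a $G$-equivariant splitting $\mathbb R[x_1,\ldots,x_n]=\mathbb R[x_1,\ldots,x_n]^G\oplus M$, where $M$ is the sum of all non-trivial isotypic components; since $I(V)$ is $G$-stable it decomposes compatibly with this splitting, which again gives the surjectivity of $\rho$ on invariants.
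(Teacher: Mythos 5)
Your argument is correct, but note that the paper does not prove this lemma at all: its entire ``proof'' is a citation to Panyushev's lemma on covariants of reductive groups. What you have written is essentially the standard argument underlying that citation, namely that linear reductivity (which is how the paper defines ``reductive'': complete reducibility of finite-dimensional representations) makes the invariants functor exact, so the $G$-equivariant surjection $\mathbb R[x_1,\ldots,x_n]\to\mathbb R[V]$ stays surjective on invariants; equivalently, the Reynolds operator passes to the quotient by the $G$-stable ideal $I(V)$. Two small points are worth making explicit if you want the argument to be fully self-contained. First, the polynomial ring is infinite-dimensional, so to have a Reynolds operator (or an isotypic splitting $\mathbb R[x_1,\ldots,x_n]=\mathbb R[x_1,\ldots,x_n]^G\oplus M$ with $M$ a $G$-submodule) you need the action to be rational, i.e.\ locally finite; this holds here because $G$ acts linearly, hence preserves each finite-dimensional space of polynomials of bounded degree. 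Second, in the real non-compact case (the hyperbolic rotations $G_h\cong\mathbb R_{>0}$) the ``averaging over a maximal compact subgroup'' construction you mention does not literally apply; one should instead use the isotypic-component projection directly, or pass to the Zariski-dense complexification, as you indicate in your alternative argument at the end. With those caveats your proof is complete and gives the reader more than the paper's bare reference does.
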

\begin{proof} See e.g. Panyushev \cite{Panyushev}, Lemma.
\end{proof}

\subsection{Elliptic rotations}
In this subsection we discuss potentials that depend only on $x_3$, i.e. $U(x)=U(x_3)$. Such potentials are invariant with respect to the compact group $G=G_e$ of elliptic rotations
$$R_e(\theta)=\begin{pmatrix}
\cos\theta & -\sin\theta & 0 \\ 
\sin\theta & \cos\theta & 0 \\ 
0 & 0 & 1 
\end{pmatrix} 
$$  
about the $x_3$ axis.  For this action the singular reduction theory developed in Cushman and Bates \cite{Cushman}
is applicable, but we will take the alternative approach outlined in the Appendix.

The tangent lift of $R_e(\theta)$ gives an action of $G\cong S^1$ on $T\mathbb H^2$
\[
\Phi_e: S^1\times T\mathbb H^2\to T\mathbb H^2:(\theta, x,y)\to (R_e(\theta)x, R_e(\theta)y),
\]
with infinitesimal generator $Y_e$ whose integral curves satisfy

\beq
\begin{split}
 \frac{dx}{dt}&=-x\times_L e_3\\
 \frac{dy}{dt}&=-y\times_L e_3\\
\end{split}
\eeq
Obviously $Y_e$ is the Hamiltonian vector field corresponding to the Hamiltonian
\beq
J_e: T\mathbb{R}^{2,1}\to \mathbb{R}:(x,y)\to \langle x\times_L y, e_3\rangle_L=x_1y_2-x_2y_1,
\eeq
and $J_e$ is the momentum mapping of the action $\Phi_e$. One can check that
\[
\{J_e,H\}^*|_{T\mathbb H^2}=0.
\]
that is, $J_e|{T\mathbb H^2}$ is an integral of motion. In particular
 the Hamiltonian system that describes the motion of a particle on the hyperbolic plane under the influence of an axisymmetric potential $U=U(x_3)$ is Liouville integrable.

Now we turn to symmetry reduction. 
\begin{lemma}\label{ellipticgen}
{\em(a)} The polynomial ring  $\mathbb{R}[x,y]^{G}$ of  polynomials in $(x,y)$  invariant under the group action is generated by 
\begin{align*}
\sigma_1 &= x_3 &\sigma_2 &= y_3 & \sigma_3 &=y_1^2+y_2^2-y_3^2 \\
\sigma_4 &=x_1y_1+x_2y_2    &\sigma_5 &=x_1^2+x_2^2     & \sigma_6 &=x_1y_2-x_2y_1
\end{align*}
subject to the relation
\beq
\sigma_4^2+\sigma_6^2=\sigma_5(\sigma_3+\sigma_2^2).
\label{relations-e}
\eeq

\noindent{\em(b)} The ring $\mathbb R[V]^{G}$ of regular functions invariant under the group action is generated by the 
restrictions of the $\sigma_j$ to $V$, with additional relations
\beq
\sigma_5-\sigma_1^2=-1, \quad \mbox{ and }\sigma_4-\sigma_1\sigma_2=0.
\label{restriction-e}
\eeq
Thus the restrictions of $\sigma_1$, $\sigma_2$, $\sigma_3$ and $\sigma_6$ to $V$ generate $\mathbb{R}[x,y]^{G}$.
\label{HilbertBasis(elliptic)}
\end{lemma}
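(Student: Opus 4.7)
The plan for part (a) is to diagonalize the $G_e$-action by passing to complex coordinates and then apply Lemma \ref{torus}. Specifically, set $z_1 = x_1 + i x_2$, $z_2 = x_1 - i x_2$, $z_3 = x_3$, $z_4 = y_1 + i y_2$, $z_5 = y_1 - i y_2$, $z_6 = y_3$. Under $R_e(\theta)$ the coordinates $z_1, z_4$ are multiplied by $e^{-i\theta}$, the coordinates $z_2, z_5$ by $e^{i\theta}$, and $z_3, z_6$ are fixed, so the complexified infinitesimal generator has the form required by Lemma \ref{torus} (with $\alpha = i$). Thus the complex invariant algebra is generated by $\psi_1 = z_1 z_2$, $\psi_2 = z_3$, $\psi_3 = z_4 z_5$, $\psi_4 = z_6$, $\psi_5 = z_1 z_5$, $\psi_6 = z_2 z_4$, subject to $\psi_1 \psi_3 = \psi_5 \psi_6$.

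Next I would rewrite these in real coordinates: $\psi_1 = x_1^2 + x_2^2 = \sigma_5$, $\psi_2 = \sigma_1$, $\psi_3 = y_1^2 + y_2^2 = \sigma_3 + \sigma_2^2$, $\psi_4 = \sigma_2$, and the pair $\psi_5, \psi_6 = (\sigma_4 \mp i\sigma_6)$. Since the real invariant algebra is the real form of the complex one, taking real and imaginary parts yields exactly the six generators $\sigma_1, \ldots, \sigma_6$. Substituting into the syzygy $\psi_1 \psi_3 = \psi_5 \psi_6$ and using $\psi_5 \psi_6 = \sigma_4^2 + \sigma_6^2$ gives the stated relation $\sigma_4^2 + \sigma_6^2 = \sigma_5(\sigma_3 + \sigma_2^2)$; since the torus syzygy generates the ideal of relations in Lemma \ref{torus}, its real form generates the relations here as well.

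For part (b), since $G_e$ is compact and therefore reductive, Lemma \ref{panlem} guarantees that every element of $\mathbb{R}[V]^{G}$ is the restriction of some element of $\mathbb{R}[x,y]^{G}$, which by part (a) is a polynomial in the $\sigma_j$. The defining equations of $V$ read $x_1^2 + x_2^2 - x_3^2 + 1 = 0$ and $x_1 y_1 + x_2 y_2 - x_3 y_3 = 0$, which translate to the new relations $\sigma_5 - \sigma_1^2 = -1$ and $\sigma_4 - \sigma_1 \sigma_2 = 0$. These let us eliminate $\sigma_5$ and $\sigma_4$ in terms of $\sigma_1, \sigma_2$, leaving $\sigma_1, \sigma_2, \sigma_3, \sigma_6$ as generators of $\mathbb{R}[V]^{G}$.

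The only genuinely non-routine step is part (a): once one trusts the reduction to the torus case via Lemma \ref{torus}, everything else is bookkeeping. The possible pitfall is making sure that \emph{all} syzygies among the complex generators really are consequences of the single quadratic relation, but this is precisely the content of the cited lemma; the translation to real generators preserves this because $\sigma_4 \pm i \sigma_6$ are simply a complex-linear change of basis in the span of $\psi_5, \psi_6$. Part (b) is then an immediate consequence of Panyushev's lemma together with the explicit form of the constraints defining $V$.
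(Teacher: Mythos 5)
Your proposal is correct and follows essentially the same route as the paper's (sketched) proof: complexify to diagonalize the circle action, invoke Lemma \ref{torus} with $\alpha=i$, return to real coordinates via $\psi_5,\psi_6=\sigma_4\mp i\sigma_6$, and deduce part (b) from Lemma \ref{panlem} together with the defining equations of $V$. (The only nit is the phase: $z_1=x_1+ix_2$ transforms by $e^{i\theta}$ rather than $e^{-i\theta}$ under $R_e(\theta)$, but this sign is immaterial to the invariant algebra.)
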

\begin{proof}[Sketch of proof]
To prove (a), diagonalize the infinitesimal generator of the $R_e(\theta)$ over $\mathbb C$, and pass to the lifted action to obtain the matrix from Lemma \ref{torus}, with $\alpha =i$. Going back to real coordinates, the assertion follows. (The generator $\sigma_3$ was chosen, instead of $\hat\sigma_3=y_1^2+y_2^2=\sigma_3+\sigma_2^2$, for reasons of convenience.) Part (b) is a direct consequence of Lemma \ref{panlem}.
\end{proof}
\begin{proposition}\label{ellipticredprop}
{\em(a)} The orbit of $(0,0,1,0,0,0)^{\rm tr}$ under the group action consists of this point only. The orbit of every other point on $T\mathbb H^2$ is an ellipse which lies in a two-dimensional affine subspace of $\mathbb R^6$.\\
{\em(b)} The map 
\[
\Gamma:\,V\to \mathbb R^4,\quad z\mapsto\left(\begin{array}{c} \sigma_1(z)\\
                                                                                                    \sigma_2(z)\\
                                                                                                    \sigma_3(z)\\
                                                                                                    \sigma_6(z)\end{array}\right),
                                                                                      \]
where we denote with $ w = ( w _1, w _2 , w _3, w _4)^{tr}$ the image of $z$ under $ \Gamma $,   
sends $V$ to the variety $Y\subseteq \mathbb R^4$ which is defined by 
\begin{equation}\label{ellipvariety}
w_4^2=(w_1^2-1)w_3-w_2^2.
\end{equation}
{\em (c)} The image of $T\mathbb H^2$ under the map $ \Gamma $ is the union of
\[
\left\{w\in Y;\,w_1>1\mbox{ and }w_2^2+w_3>0\right\}
\]
and
\[
\{\left(\begin{array}{c}1\\
                                                                                                                                 0\\
                                                                                                                                 w_3\\
                                                                                                                                 0\end{array}\right); w_3\geq 0\}
\cup\{\left(\begin{array}{c}w_1\\
                                                  0\\
                                                    0\\
                                                 0\end{array}\right); w_1\geq 1\}.
\]
In particular $\Gamma(V)$ is Zariski-dense in $Y$.\\
{\em(d)} The Poisson-Dirac bracket on $V$ induces a Poisson bracket $\left\{\cdot,\cdot\right\}^\prime$ on $Y$, which is determined by 
\[
\{w_1,w_2\}^\prime=-1+w_1^2, \quad \{w_2,w_3\}^\prime=2w_1w_3, \quad \{w_3,w_1\}^\prime=-2w_2,
\]
and
\[
\quad \{w_i,w_4\}^\prime=0,\,1\leq i\leq 3.
\]
{\em (e)} The equations of motion \eqref{motion} on $\mathbb H^2$ for the Hamiltonian 
\[
H=\frac12 \sigma_3 + U(\sigma_1)
\]
with axisymmetric potential are mapped by $\Gamma$ to the reduced Hamiltonian system
\[
\begin{array}{rcccl}
\dot w_1&=&\left\{w_1,h\right\}^\prime&=& w_2\\
\dot w_2&=&\left\{w_2,h\right\}^\prime&=& w_1w_3+(1-w_1^2)U^\prime(w_1)\\
\dot w_3&=&\left\{w_3,h\right\}^\prime&=& -2w_2U^\prime(w_1)\\
\dot w_4&=&\left\{w_4,h\right\}^\prime&=& 0
\end{array}
\]
on $Y$, with $h(w):= \frac12 w_3+U(w_1)$. This system admits the first integrals $h$ and $w_4$. 
\end{proposition}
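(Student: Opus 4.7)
\medskip
\noindent\textbf{Proof proposal.}
The plan is to apply the general Poisson-reduction scheme of Proposition \ref{poiredprop} from the Appendix, using the four invariants $\sigma_1,\sigma_2,\sigma_3,\sigma_6$ from Lemma \ref{ellipticgen}(b) as a Hilbert map. Parts (a)--(c) establish the geometry of this Hilbert map on $V$ and $T\mathbb H^2$; part (d) transfers the Dirac--Poisson bracket to the image variety $Y$; part (e) is then a direct computation with the reduced Hamiltonian.

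Part (a) is a direct orbit computation. Acting diagonally, $R_e(\theta)$ fixes the $x_3$-axis pointwise, so $(x,y)\in T\mathbb H^2$ is fixed iff $x_1=x_2=0$ and $y_1=y_2=0$; the constraint $\langle x,x\rangle_L=-1$ with $x_3>0$ then forces $x_3=1$, and $\langle x,y\rangle_L=0$ forces $y_3=0$. For every other point the orbit is parametrized by $\theta\mapsto v_0+(\cos\theta)v_1+(\sin\theta)v_2$ in $\mathbb R^6$, with $v_0=(0,0,x_3,0,0,y_3)^{\rm tr}$ and $v_1,v_2$ built from $(x_1,x_2,y_1,y_2)$; this lies in the affine $2$-plane through $v_0$ spanned by $v_1,v_2$ and traces an ellipse. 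Part (b) follows by substituting the $V$-relations $\sigma_5=\sigma_1^2-1$ and $\sigma_4=\sigma_1\sigma_2$ into $\sigma_4^2+\sigma_6^2=\sigma_5(\sigma_3+\sigma_2^2)$ and cancelling $\sigma_1^2\sigma_2^2$, which yields \eqref{ellipvariety}. For (c) one splits into $x_3=1$ (where $\langle x,y\rangle_L=0$ forces $y_3=0$, giving $(1,0,y_1^2+y_2^2,0)$) and $x_3>1$; in the latter case $w_2^2+w_3=y_1^2+y_2^2\ge 0$, with equality only when $y_1=y_2=0$, which combined with $x_1y_1+x_2y_2=x_3y_3$ and $x_3>1$ forces $y_3=0$ and yields the points $(w_1,0,0,0)$ with $w_1>1$. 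Conversely each such value is attained, and Zariski-density follows since the complement in $Y$ of the open set $\{w_1>1,\,w_2^2+w_3>0\}$ is of lower dimension.

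Part (d) is the heart of the argument. By Proposition \ref{poiredprop} the bracket descends, provided each $\{\sigma_i,\sigma_j\}^*$ lies in the algebra generated by the $\sigma_k$ modulo the ideal of $V$; this we verify directly from the table in Lemma \ref{dirac}. One immediately gets $\{x_3,y_3\}^*=-1+x_3^2$, i.e.\ $\{w_1,w_2\}^\prime=-1+w_1^2$. Expanding $\{y_3,\sigma_3\}^*$ gives $2x_3(y_1^2+y_2^2)-2y_3(x_1y_1+x_2y_2)$, which on $V$ (using $x_1y_1+x_2y_2=x_3y_3$) collapses to $2\sigma_1\sigma_3$; similarly $\{\sigma_3,x_3\}^*$ reduces to $-2\sigma_2$. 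The vanishings $\{\sigma_i,\sigma_6\}^*\equiv 0$ for $i=1,2,3$ admit a clean conceptual argument: $\sigma_6=J_e$ is the momentum of the $G_e$-action, hence $\{J_e,f\}^*=X_{J_e}(f)$ vanishes on any $G_e$-invariant $f$, and the $\sigma_i$ are invariants by construction; a direct expansion from the table also confirms the cancellations.

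Part (e) is then a routine substitution: with $h=\tfrac12 w_3+U(w_1)$ the formulas $\dot w_i=\{w_i,h\}^\prime$ reproduce the four displayed equations. Conservation of $h$ is immediate from $\{h,h\}^\prime=0$, and conservation of $w_4$ follows because $\{w_i,w_4\}^\prime=0$ for $i=1,2,3$ forces $\{w_4,h\}^\prime=0$. I expect the main obstacle to be the case analysis and boundary bookkeeping in (c), together with the algebraic verification that the Dirac brackets of the generators close in the $\sigma_k$ modulo the ideal of $V$; once these are in hand, (d) and (e) are essentially formal.
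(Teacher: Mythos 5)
Your proposal is correct and follows essentially the same route as the paper: a direct orbit computation for (a), substitution of the $V$-relations into \eqref{relations-e} for (b), case analysis for (c), bracket computations from the Dirac table combined with Proposition \ref{poiredprop} for (d)--(e); your momentum-map argument that $\{\sigma_i,\sigma_6\}^*=0$ because $\sigma_6=J_e$ generates the action and the $\sigma_i$ are invariants is a pleasant conceptual shortcut where the paper simply computes. The one step you assert without carrying out is the surjectivity onto $\left\{w\in Y:\, w_1>1,\ w_2^2+w_3>0\right\}$ in (c): the paper constructs an explicit preimage (choosing $x_3=w_1$, $y_3=w_2$, $y_1=0$, $y_2=\sqrt{w_2^2+w_3}$, $x_1=w_4/\sqrt{w_2^2+w_3}$) and then verifies $x_1^2-x_3^2+1\leq 0$ using \eqref{ellipvariety} so that $x_2$ can be solved for --- a small but non-vacuous check you should include.
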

\begin{proof} We verify part (a) by elementary means. Thus let $(u,v)^{\rm tr}\in \mathbb R^6$. The points $(x,y)^{\rm tr}$ in its orbit satisfy $x_3=u_3$, $y_3=v_3$ and
\[
\left(\begin{array}{c}x_1\\
                               x_2\\
                               y_1\\
                                y_2\end{array}\right)=\cos\theta \left(\begin{array}{c}u_1\\
                               u_2\\
                               v_1\\
                                v_2\end{array}\right) +\sin\theta\left(\begin{array}{c}-u_2\\
                               u_1\\
                               -v_2\\
                                v_1\end{array}\right).
\]
This is obviously an ellipse in a two-dimensional affine subspace (generated by $ (u _1, u _2 , v _1 ,v _2 )^{tr}$ and $ ( -u _2 , u _1 , -v _2 , v _1 )^{tr} $ )  unless $u_1=u_2=v_1=v_2=0$. In the latter case we find $x_3=1$ and $y_3=0$ from the conditions defining $T\mathbb H^2$.\\
Combining \eqref{relations-e} and \eqref{restriction-e} yields
\[
\sigma_6^2=(\sigma_1^2-1)\sigma_3-\sigma_2^2,
\]
which implies $\Gamma(V)\subseteq Y$, and (b) follows.\\
To prove part (c), let $w\in\Gamma(T\mathbb H^2)$, and $(x,y)^{\rm tr}$ in its inverse image. Then
\[
\begin{array}{rcl}
x_3&=& w_1\\
y_3&=& w_2\\
y_1^2+y_2^2-y_3^2&=& w_3\\
x_1y_2-x_2y_1&=& w_4
\end{array}
\]
and the conditions defining $T\mathbb H^2$ immediately show that $w_1\geq 1$ and $w_2^2+w_3\geq 0$.\\
If $w_1>1$ and $w_2^2+w_3>0$ then one obtains an inverse image as follows: Choose $x_3=w_1$, $y_3=w_2$, $y_1=0$, $y_2=\sqrt{w_2^2+w_3}$, $x_1=w_4/\sqrt{w_2^2+w_3}$. From equation \eqref{ellipvariety} one sees that
\[
x_1^2-x_3^2+1=-w_1^2w_2^2\leq 0,
\]
and therefore one can find $x_2$ such that $x_1^2+x_2^2-x_3^2=-1$, whence $(x,y)^{\rm tr}\in T\mathbb H^2$.\\
If $w_1=1$ then equation \eqref{ellipvariety} shows $x_1=x_2=0$ and $y_3=0$. Thus only the points 
$(1,0,w_3,0)^{\rm tr}$ with $w_3\geq 0$ are in $\Gamma(T\mathbb H^2)$.\\
If $w_2^2+w_3=0$ then $y_1=y_2=0$, hence $w_4=0$, and $y_3=0$ from equation \eqref{ellipvariety}; thus $w_2=w_3=0$. Therefore only the points $(w_1,0,0,0)^{\rm tr}$ with $w_1\geq 1$ lie in the image of $T\mathbb H^2$.\\
Part (d) is proven by the following computations 
 (using Lemma \ref{dirac}), and invoking Proposition \ref{poiredprop}.
\[
\begin{array}{rclcl}
\{\sigma_1, \sigma_2\}^*&=&\{x_3,y_3\}^*&=&x_3^2-1=\sigma_1^2-1\\
\{\sigma_3, \sigma_1\}^*&=&\{y_1^2+y_2^2-y_3^2,x_3\}^*&=&2y_1(-x_1x_3)+2y_2(-x_2x_3)-2y_3(1-x_3^2)\\
                              & & &=& -2x_3(x_1y_1+x_2y_2-x_3y_3) -2y_3\\
                              & & &=& -2\sigma_2
\end{array}
\]
In the same manner one shows $\{\sigma_2,\sigma_3\}^*=2\sigma_1\sigma_3$ and all $\{\sigma_6,\sigma_i\}^*=0$. Part (e) now follows from straighforward computations, and again by invoking Proposition \ref{poiredprop}.
\end{proof}
Note that the theorem of Procesi and Schwarz \cite{ProcesiSchwarz} is not directly applicable to $\Gamma$; thus the inequalities determining the image need to be obtained directly.\\

\medskip
Rather than discussing the reduced system on $Y$, one may consider the map
\[
\widetilde \Gamma:\,V\to \mathbb R^3,\quad z\mapsto\left(\begin{array}{c} \sigma_1(z)\\
                                                                                                    \sigma_2(z)\\
                                                                                                    \sigma_3(z)
                                                                                                    \end{array}\right),
                                                                                      \]
thus compose $\Gamma$ and the projection $\mathbb R^4\to \mathbb R^3$ onto the first three entries. This means restricting the system in part (e) of Proposition \ref{ellipticredprop} to the
system for $w_1$, $w_2$ and $w_3$ on $\mathbb R^3$, with first integrals $j^2=(w_1^2-1)w_3-w_2^2$ and $h$. 

\begin{corollary}\label{ellipticredmap}
{\em (a)} A point $w=(w_1,\,w_2,\,w_3)^{\rm tr}\in\mathbb R^3$ lies in $\widetilde\Gamma(T\mathbb H^2)$ if and only if the following inequalities are satisfied:
\[
w_1\geq 1,\quad w_2^2+w_3\geq 0,\quad (w_1^2-1)w_3-w_2^2\geq 0.
\]
{\em(b)} The inverse image of $w$ is a union of two ellipses whenever 
$(w_1^2-1)w_3-w_2^2>0$, an ellipse whenever $(w_1^2-1)w_3-w_2^2=0$ and $(w_1,w_2^2+w_3)\not=(1,0)$, and equal to the point $(0,0,1,0,0,0)$ if  $(w_1,w_2^2+w_3)=(1,0)$, i.e., $w=(1,0,0)$.

\end{corollary}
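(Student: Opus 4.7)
My plan is to verify the three inequalities in (a) as necessary conditions by inspecting the defining relations of $T\mathbb H^2$ together with the relation derived in Proposition \ref{ellipticredprop}, and then to prove sufficiency and classify fibers in (b) simultaneously by reducing everything to an explicit Cauchy--Schwarz problem for two vectors in $\mathbb R^2$.

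For necessity in (a), take $(x,y)\in T\mathbb H^2$ with $\widetilde\Gamma(x,y)=w$. The identity $x_3^2=1+x_1^2+x_2^2$ together with $x_3>0$ forces $w_1=x_3\ge 1$. Next, $w_2^2+w_3=y_3^2+(y_1^2+y_2^2-y_3^2)=y_1^2+y_2^2\ge 0$. Finally, the relation $\sigma_6^2=(\sigma_1^2-1)\sigma_3-\sigma_2^2$ (obtained in the proof of Proposition \ref{ellipticredprop}(b) by combining \eqref{relations-e} and \eqref{restriction-e}) yields $(w_1^2-1)w_3-w_2^2=(x_1y_2-x_2y_1)^2\ge 0$.

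For sufficiency and for (b) I would observe that any preimage $(x,y)$ of $w$ must satisfy $x_3=w_1$, $y_3=w_2$, while the remaining coordinates are constrained by
\[
x_1^2+x_2^2=w_1^2-1,\qquad y_1^2+y_2^2=w_2^2+w_3,\qquad x_1y_1+x_2y_2=w_1w_2,
\]
where the third equation encodes $\langle x,y\rangle_L=0$. In the generic regime $w_1>1$ and $w_2^2+w_3>0$ I set $(x_1,x_2)=\sqrt{w_1^2-1}\,(\cos\alpha,\sin\alpha)$ and $(y_1,y_2)=\sqrt{w_2^2+w_3}\,(\cos\beta,\sin\beta)$; the third equation becomes
\[
\cos(\alpha-\beta)=\frac{w_1w_2}{\sqrt{(w_1^2-1)(w_2^2+w_3)}},
\]
and a direct algebraic check shows the right-hand side has modulus at most one precisely when $(w_1^2-1)w_3-w_2^2\ge 0$, which completes sufficiency in (a). Since $G_e\cong S^1$ acts by simultaneously incrementing both $\alpha$ and $\beta$ by $\theta$, the orbits are labelled by $\alpha-\beta\pmod{2\pi}$: strict inequality in the Cauchy--Schwarz bound yields two admissible values of $\alpha-\beta$ and hence two $S^1$-orbits (ellipses in $\mathbb R^6$), while equality yields exactly one.

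It remains to handle the degenerate boundaries. If $w_1=1$ then the first equation forces $x_1=x_2=0$, whence $x_1y_1+x_2y_2=0=w_1w_2$ requires $w_2=0$, and the preimage reduces to the circle $y_1^2+y_2^2=w_3$ in the $(y_1,y_2)$-plane, which is a single point exactly when $w_3=0$, i.e.\ when $w=(1,0,0)$, giving the fixed point $(0,0,1,0,0,0)$. Symmetrically, if $w_2^2+w_3=0$ then $y_1=y_2=0$, so $w_2=0$ and then $(w_1^2-1)w_3-w_2^2=0$ degenerates to $w_3=0$; the preimage is the circle $x_1^2+x_2^2=w_1^2-1$ with all other coordinates vanishing, a single ellipse when $w_1>1$. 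I expect the only real bookkeeping obstacle to be this case analysis on the boundary; the main conceptual content is simply that the third inequality in (a) is precisely the Cauchy--Schwarz inequality for the planar vectors $(x_1,x_2)$ and $(y_1,y_2)$, and that the $S^1$-action quotients out the diagonal rotation of this pair.
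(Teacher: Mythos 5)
Your argument is correct, and it follows a genuinely different route from the paper's. The paper's proof is essentially a two-step reduction to previously established facts: since $G_e$ is compact, the fiber of the Hilbert map $\Gamma$ over any point of $\Gamma(T\mathbb H^2)\subseteq Y\subseteq\mathbb R^4$ is a single group orbit (citing Cushman--Bates), and these orbits were already classified in Proposition \ref{ellipticredprop}(a); the count of orbits over a point of $\mathbb R^3$ then comes from observing that the projection $Y\to\mathbb R^3$ forgetting $w_4$ is two-to-one exactly where $w_4^2=(w_1^2-1)w_3-w_2^2>0$ and one-to-one where this vanishes, with the image description taken from Proposition \ref{ellipticredprop}(c). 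You instead re-derive orbit separation by hand in this concrete case via the polar parametrization: your two admissible values of $\alpha-\beta$ are precisely the two signs of $w_4=\sqrt{(w_1^2-1)(w_2^2+w_3)}\,\sin(\beta-\alpha)$, so the two arguments are counting the same thing, but yours does not invoke the general fact that polynomial invariants of a compact group separate orbits. What your approach buys is self-containedness and a conceptual identification of the third inequality in (a) as the Cauchy--Schwarz inequality for the planar vectors $(x_1,x_2)$ and $(y_1,y_2)$; what the paper's buys is brevity and systematic reuse of the intermediate variety $Y$. Two trivial points of hygiene: in the last boundary case the phrase ``all other coordinates vanishing'' should read ``$y=0$ and $x_3=w_1$''; and you should note explicitly (as you implicitly use) that when $w_1=1$ the third inequality already forces $w_2=0$, so no admissible $w$ on that boundary is lost.
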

\begin{proof} Since $G$ is compact, the inverse image of every point in $\Gamma(T\mathbb H^2)$ is a group orbit (see \cite{Cushman}), and these were determined in Proposition \ref{ellipticredprop}. Moreover, by \eqref{ellipvariety}
a point of $\mathbb R^3$ has exactly two inverse images in $Y$ under the projection
\[
\left(\begin{array}{c}w_1\\
                               w_2\\
                               w_3\\
                                w_4\end{array}\right)\mapsto \left(\begin{array}{c}w_1\\
                               w_2\\
                               w_3\end{array}\right)
\]
if and only if $(w_1^2-1)w_3-w_2^2>0$, and exactly one inverse image if and only if $(w_1^2-1)w_3-w_2^2=0$.  With Proposition \ref{ellipticredprop} all assertions follow by routine verification.
\end{proof}
The discussion of the reduced system
\begin{equation}\label{ellipticredsys}
\begin{array}{rcl}
\dot w_1&=& w_2\\
\dot w_2&=& w_1w_3+(1-w_1^2)U^\prime(w_1)\\
\dot w_3&=& -2w_2U^\prime(w_1)
\end{array}
\end{equation}
on $\widetilde\Gamma(T\mathbb H^2)$ is now relatively straightforward, invoking some familiar facts about one-dimensional manifolds and differential equations. We will
discuss the system for rational potential $U$.
\begin{proposition}\label{ellipticredana}
{\em (a)} Relative equilibria: The stationary points of \eqref{ellipticredsys} on $\widetilde\Gamma(T\mathbb H^2)$ are 
\[
z_1:=\left(\begin{array}{c}1\\
                               0\\
                                0\end{array}\right)\quad{\rm and}\quad z_\rho:=\left(\begin{array}{c}\rho\\
                               0\\
                                \frac{\rho^2-1}{\rho}U^\prime(\rho)\end{array}\right), \quad{\rm with}\,\, \rho>1,\,\,U^\prime(\rho)\geq 0.
\]
The inverse image of $z_1$ under $\widetilde\Gamma$ consists of the stationary point $(0,0,1,0,0,0)$. If $\rho>1$ and $U^\prime(\rho)>0$ then the inverse image of $z_\rho$ under $\widetilde\Gamma$ is a union of two ellipses, each of which is a closed trajectory of \eqref{motion}. If $\rho>1$ and $U^\prime(\rho)=0$ then the inverse image of $z_\rho$ is an ellipse consisting of stationary points of \eqref{motion}.\\
{\em(b)} Every nonstationary solution of \eqref{ellipticredsys} is contained in the subvariety of $\mathbb R^3$ defined by the equations
\[
\begin{array}{rclcl}
j^2&:=&(w_1^2-1)w_3-w_2^2&=& c_1\\
h&:=&\frac12 w_3+U(w_1)&=& c_2
\end{array}
\]
with constants $c_1\geq 0$ and $c_2$. This variety is smooth, hence a one-dimensional submanifold, whenever it contains no stationary point. In this case, each of its connected components is diffeomorphic to either a circle or a line, and is a trajectory of \eqref{ellipticredsys}.\\
{\em (c)} For $c_1=0$ only the stationary point $(1,0,0)$ and the half-line defined by $w_1-1=w_2=0$ and $w_3>0$ are in $\widetilde\Gamma(T\mathbb H^2)$. The stationary points on this half-line are precisely the $z_\rho$, $\rho>1$ with $U^\prime(\rho)=0$, and the half-line is a union of stationary points and nonstationary solutions whose limit sets are contained in the set of stationary points.\\
{\em (d)} The inverse image of a nonstationary solution of \eqref{ellipticredsys} on the level set $j=c_1>0$, $h=c_2$ is diffeomorphic to a torus or a cylinder. Moreover it is an invariant set of \eqref{motion} which contains no stationary points.\\
The inverse image of a level set $j=0$, $h=c_2$ is a half-cone (``pinched cylinder") which contains the stationary point $(0,0,1,0,0,0)$ and all circles defined by $x_3=\rho>1$, $x_1^2+x_2^2=\rho^2-1$ and $y=0$, which are made up from stationary points.
\end{proposition}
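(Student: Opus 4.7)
The plan is to analyse the reduced vector field directly on $\widetilde\Gamma(T\mathbb H^2)\subseteq\mathbb R^3$, exploiting the two first integrals $j^2=(w_1^2-1)w_3-w_2^2$ and $h=\tfrac12w_3+U(w_1)$, and then to lift conclusions to $T\mathbb H^2$ via Corollary~\ref{ellipticredmap}. Throughout I would use the constraint inequalities from Corollary~\ref{ellipticredmap}(a) to identify which candidate points actually belong to the image.

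For part (a), set $\dot w_1=\dot w_2=\dot w_3=0$. Then $w_2=0$ and $w_1w_3=(w_1^2-1)U'(w_1)$. Together with $w_1\geq 1$, $w_3\geq 0$ and $(w_1^2-1)w_3\geq 0$, this yields either the boundary point $z_1$ (forcing $w_3=0$) or $z_\rho$ with $\rho>1$ and $U'(\rho)\geq 0$. The three cases for the inverse image under $\widetilde\Gamma$ follow by reading off the value of $(w_1^2-1)w_3-w_2^2$ and comparing $(w_1,w_2^2+w_3)$ with $(1,0)$ in Corollary~\ref{ellipticredmap}(b). When $U'(\rho)=0$, the inverse image is an ellipse $\{x_3=\rho,\,x_1^2+x_2^2=\rho^2-1,\,y=0\}$, and since $\nabla_L^x U(x)=-U'(x_3)\,e_3$, the remark after \eqref{motion} identifies these as stationary points of the full system.

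For part (b), I would check by direct computation that $j^2$ and $h$ are conserved along \eqref{ellipticredsys}. A short gradient computation shows that the differentials of $j^2$ and $h$ fail to be independent exactly at the equilibria found in (a); by the regular value theorem, the joint level set is then a smooth $1$-manifold on which the vector field is nowhere zero, so by the classification of connected $1$-manifolds each component is diffeomorphic to $S^1$ or $\mathbb R$ and is a trajectory. Part (c) is the specialisation $c_1=0$: the equation $(w_1^2-1)w_3=w_2^2$ combined with $w_1\geq 1$ and $w_3\geq 0$ forces either $w_1>1$ with $w_2=w_3=0$, or $w_1=1$ with $w_2=0$ and $w_3\geq 0$. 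The half-line $\{w_1=1,w_2=0,w_3\geq 0\}$ is invariant (the right hand sides of \eqref{ellipticredsys} vanish identically there), so the dynamics on it is trivial, and on the set $\{w_2=w_3=0,w_1>1\}$ any non-equilibrium trajectory is monotone in $w_1$ with $\omega$- and $\alpha$-limits lying in the set of equilibria $z_\rho$ ($U'(\rho)=0$) by the standard one-dimensional argument.

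Part (d) is the lift. For a regular level set with $c_1>0$ the fibre of $\widetilde\Gamma$ consists of two ellipses (Corollary~\ref{ellipticredmap}(b)), so the preimage of a connected reduced trajectory is a two-sheeted ellipse bundle over $S^1$ or $\mathbb R$; such a bundle over a circle or line trivialises, yielding two tori or two cylinders, and no equilibria occur there since none lie above regular points. For $c_1=0$ the two sheets of the $\mathbb R^4\to\mathbb R^3$ projection collapse onto one by \eqref{ellipvariety}, so the preimage of the invariant half-line becomes a cone pinched at the single equilibrium $(0,0,1,0,0,0)$ lying over $z_1$, with the circles over $\rho>1$ as described. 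The main subtlety I expect is the covering argument in part (d)---making precise that the double cover trivialises over each connected reduced trajectory and that equilibria only appear in the fibre over $c_1=0$---while the remaining verifications reduce to the first-integral computation and Corollary~\ref{ellipticredmap}.
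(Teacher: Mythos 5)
Your overall strategy---reading off the equilibria from the right-hand side of \eqref{ellipticredsys}, computing the Jacobian of $(j^2,h)$ to get smoothness of the joint level sets away from equilibria, and lifting via the fibre description of Corollary \ref{ellipticredmap}---is exactly the paper's route, and your parts (a), (b) and the $c_1>0$ half of (d) are fine (the paper itself records only the Jacobian computation and calls the rest routine; your remark that the preimage for $c_1>0$ splits into \emph{two} tori or cylinders, one for each sign of the momentum, is if anything more precise than the statement).

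There is, however, a genuine error in your treatment of part (c). You claim that $(w_1^2-1)w_3=w_2^2$ together with $w_1\geq 1$ and $w_3\geq 0$ forces either $w_1>1$ with $w_2=w_3=0$, or $w_1=1$ with $w_2=0$. This is false: the point $(w_1,w_2,w_3)=(2,1,1/3)$ satisfies all the constraints of Corollary \ref{ellipticredmap}(a) and has $j^2=0$, so the locus $c_1=0$ inside $\widetilde\Gamma(T\mathbb H^2)$ is a two-dimensional cone, not a union of two half-lines; it collapses to the half-lines only after additionally imposing $w_2=0$, which you have not done. Your subsequent dynamical claims inherit the mistake: the half-line $\{w_1=1,\,w_2=0,\,w_3\geq 0\}$ is \emph{not} invariant, since there $\dot w_2=w_1w_3+(1-w_1^2)U'(w_1)=w_3$, which is positive for $w_3>0$, so a trajectory through $(1,0,w_3)$ immediately leaves the half-line and curves onto the cone; similarly $\{w_2=w_3=0,\,w_1>1\}$ has $\dot w_2=(1-w_1^2)U'(w_1)$ and is not invariant unless $U'$ vanishes. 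The ``standard one-dimensional argument'' is therefore being applied to sets that are not unions of trajectories. What (c) actually requires is an analysis of the one-dimensional level curves $\{j^2=0,\ h=c_2\}$ lying on that cone (with the conical point $(1,0,0)$ and the $z_\rho$ with $U'(\rho)=0$ as the possible singular points), and the pinched-cylinder assertion in (d) concerns the preimage of such a curve, not of a half-line. As written, part (c) and the $c_1=0$ portion of (d) do not go through.
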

\begin{proof} The proof of part (a) is straightforward with Corollary \ref{ellipticredmap}. The Jacobian of $(j^2,h)$ is equal to
\[
\left(\begin{array}{ccc} 2w_1w_3& -2 w_2& w_1^2-1\\
                                       U^\prime(w_1) & 0 & 1/2\end{array}\right),
\]
which has rank $<2$ only if $w_2=0$ (consider the second and third columns), and $w$ is stationary (consider the first and third columns). Conversely, the Jacobian has rank $<2$ at every stationary point. Thus the variety is smooth whenever 
it contains no stationary point.
From these observations and routine arguments the remaining assertions follow.

\end{proof}
\begin{remark}(a) One can determine the level sets of $(j^2,h)$ in a direct manner. Indeed, for $w_1>1$ one finds
\[
2\left(c_2-U(w_1)\right)=w_3=\frac{w_2^2+c_1}{w_1^2-1}
\]
and thus
\[
w_2^2=2\left(c_2-U(w_1)\right)\left(w_1^2-1\right)-c_1.
\]
(b) While the reduction procedure is initially restricted to rational potentials $U$, a well-known theorem by Schwarz \cite{Schwarz} allows extension to smooth potentials; see also the Remark at the end of the Appendix.

\end{remark}
\begin{example} For linear potential, thus $U(x_3)= c\cdot x_3$ with $c\not=0$, system  \eqref{ellipticredsys} admits equilibria
\[
z_\rho=\left(\begin{array}{c} \rho\\
                                              0\\
                                          c(\rho^2-1)/\rho\end{array}\right),\quad \rho\geq 1
\]
if $c>0$. The corresponding solutions of \eqref{motion} can be interpreted as uniform motions on a circle $x_3=\rho$ whenever $\rho>1$.  For $c>0$ all trajectories of the reduced system are bounded (see Figure \ref{fig:elliptic}) , as can be seen from the expression in the Remark above. The inverse images of these trajectories are tori.
\begin{figure}
        \centering
        \begin{subfigure}[b]{0.5\textwidth}
                \centering
                \includegraphics[width=\textwidth]{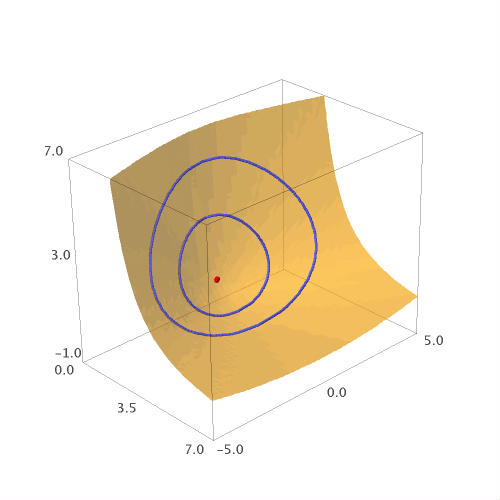}
                \caption{ $ c = 1 $ and $ c _1 = 4 $ }
        \end{subfigure}%
        ~ 
        \begin{subfigure}[b]{0.5\textwidth}
                \centering
                \includegraphics[width=\textwidth]{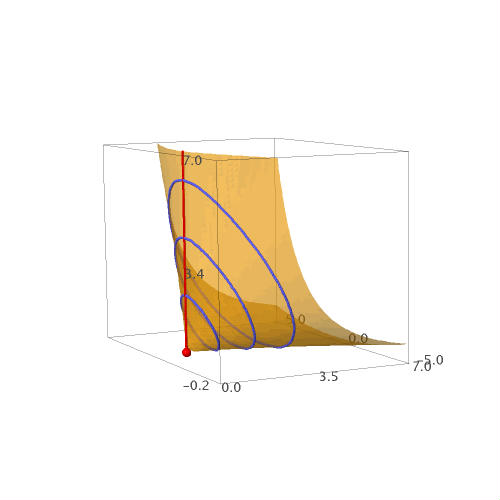}
               \caption{ $ c = 1 $ and $ c _1 = 0 $ }
        \end{subfigure} 
        \caption{Trajectories (in blue) and stationary points (in red) of the reduced system in the elliptic case  with a linear potential.}\label{fig:elliptic}
\end{figure}
In case $c<0$ the only equilibrium of \eqref{ellipticredsys} is $z_1$. All other trajectories of the system are unbounded. Their inverse images are invariant cylinders.

\end{example}
\subsection{Hyperbolic Rotations}
We now turn to the class of potentials that depend only on $x_1$, i.e. $U(x)=U(x_1)$. Such potentials are invariant with respect to the group $G=G_h\cong (\mathbb R_+^*,\,\cdot)$ of hyperbolic rotations
$$R_h(s)=\begin{pmatrix}
1 & 0 & 0 \\ 
0 & \cosh s & \sinh s \\ 
0 & \sinh s & \cosh s 
\end{pmatrix},\quad s\in\mathbb R
$$  
about the $x_1$ axis.  The tangent lift gives an action on $T\mathbb H^2$ via
\[
\Phi_h: H^1\times T\mathbb H^2\to T\mathbb H^2:(x,y)\to (R_h(t)x, R_h(t)y).
\]
This group is not compact, and its action on the six-dimensional ambient space is not proper. But $G_h$ is reductive, hence the reduction via invariants works just as well in this setting. (It turns out that the restriction of the action to $T\mathbb H^2$ is proper, but we will not use this fact, since reduction by the Poisson variety approach works conveniently.)
The infinitesimal generator of $\Phi_h$ is the vector field $Y_h$ whose integral curves satisfy

\beq
\begin{split}
 \frac{dx}{dt}&=x\times_L e_1\\
 \frac{dy}{dt}&=y\times_L e_1\\
\end{split}
\eeq
Note that $Y_h$ is the Hamiltonian vector field corresponding to the Hamiltonian
\beq
J_h: T\mathbb{R}^{2,1}\to \mathbb{R}:(x,y)\to -\langle x\times_L y, e_1\rangle_L=x_3y_2-x_2y_3,
\eeq
and $J_h$ is the momentum mapping of the action $\Phi_h$. One can check that
\[
\{J_h,H\}^*|_{T\mathbb H^2}=0.
\]
that is, $J_h|{T\mathbb H^2}$ is an integral of motion. Therefore the Hamiltonian system that describes the motion of a particle on the hyperbolic plane under the influence of an axisymmetric potential $U(x_1)$ is Liouville integrable.

We turn to symmetry reduction for this case. The proof of the following is analogous to Lemma \ref{ellipticgen}.
\begin{lemma}\label{hyperbolicgen}
 {\em (a)}The algebra $\mathbb{R}[x,y]^{G}$ of $G$-invariant polynomials is generated by 
\begin{align*}
\tau_1 &= x_1 &\tau_2 &= y_1 & \tau_3 &=y_1^2+y_2^2-y_3^2 \\
\tau_4 &=x_2y_2-x_3y_3    &\tau_5 &=x_2^2-x_3^2      & \tau_6 &=x_3y_2-x_2y_3
\end{align*}
subject to the relation
\beq
\tau_4^2-\tau_6^2=\tau_5(\tau_3-\tau_2^2).
\label{relations-h}
\eeq
{\em(b)} The ring $\mathbb R[V]^G$ of regular functions invariant under the group action is generated by the restrictions of the $\tau_j$ to $V$, with additional relations
\beq
\tau_5+\tau_1^2=-1 \mbox{ and }\tau_4+\tau_1\tau_2=0.
\label{restriction-h}
\eeq
Thus the restrictions of $\tau_1$, $\tau_2$, $\tau_3$ and $\tau_6$ to $V$ generate $\mathbb R[V]^G$.
\end{lemma}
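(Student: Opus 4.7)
The plan is to follow the template of the proof of Lemma \ref{ellipticgen}, with the simplification that the infinitesimal generator of $G_h$ already has \emph{real} eigenvalues so no passage to $\mathbb{C}$ is needed. The infinitesimal generator of $R_h(s)$, lifted diagonally to $T\mathbb{R}^{2,1}$, is the $6\times 6$ block-diagonal matrix with two copies of the block
\[
\begin{pmatrix} 0 & 0 & 0 \\ 0 & 0 & 1 \\ 0 & 1 & 0 \end{pmatrix},
\]
whose eigenvalues are $0,1,-1$ with real eigenvectors. I would introduce new linear coordinates $(z_1,z_2,z_3):=(x_2+x_3,\,x_2-x_3,\,x_1)$ on the base and analogously $(z_4,z_5,z_6)$ on the fiber. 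In these coordinates the generator takes the diagonal form $\mathrm{diag}(1,-1,0,1,-1,0)$, which is precisely the situation covered by Lemma \ref{torus} with $\alpha=1$. That lemma then delivers six generators $\psi_1,\dots,\psi_6$ subject to the single relation $\psi_1\psi_3-\psi_5\psi_6=0$.

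Next I would translate the $\psi_j$ back to the original $(x,y)$-coordinates. A direct computation gives $\psi_1=x_2^2-x_3^2=\tau_5$, $\psi_2=x_1=\tau_1$, $\psi_3=y_2^2-y_3^2=\tau_3-\tau_2^2$, $\psi_4=y_1=\tau_2$, $\psi_5=(x_2+x_3)(y_2-y_3)$ and $\psi_6=(x_2-x_3)(y_2+y_3)$, and the relations $\tau_4=(\psi_5+\psi_6)/2$, $\tau_6=(\psi_5-\psi_6)/2$ show that the linear change of basis between the $\psi_j$ and the $\tau_j$ is invertible over $\mathbb{R}$. Hence $\tau_1,\dots,\tau_6$ generate the same real algebra as $\psi_1,\dots,\psi_6$, and the relation $\psi_1\psi_3=\psi_5\psi_6$ translates into
\[
\tau_5(\tau_3-\tau_2^2)=(\tau_4+\tau_6)(\tau_4-\tau_6)=\tau_4^2-\tau_6^2,
\]
which is exactly (\ref{relations-h}).

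For part (b), I would observe that $G_h\cong(\mathbb{R}_+^*,\,\cdot\,)$ is reductive (its complexification is the torus $\mathbb{C}^*$), so Lemma \ref{panlem} applies and states that every element of $\mathbb{R}[V]^G$ is the restriction of an element of $\mathbb{R}[x,y]^G$ to $V$. By part (a) the $\tau_j|_V$ therefore generate $\mathbb{R}[V]^G$. The two supplementary relations are obtained simply by rewriting the defining equations of $V$ in terms of the generators: $c_1=\langle x,x\rangle_L+1=\tau_1^2+\tau_5+1$ yields $\tau_5+\tau_1^2=-1$, and $c_2=\langle x,y\rangle_L=\tau_1\tau_2+\tau_4$ yields $\tau_4+\tau_1\tau_2=0$.

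The only delicate point is verifying that the real change of basis between the $\psi_j$ and the $\tau_j$ is genuinely invertible, so that no information is lost in returning to real coordinates; this is a routine triangular-invertibility check, so no serious obstacle is anticipated. The rest is essentially formal, since Lemma \ref{torus} and Lemma \ref{panlem} do all the structural work.
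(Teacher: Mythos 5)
Your proposal is correct and is essentially the paper's own argument: the paper proves this lemma by declaring it ``analogous to Lemma \ref{ellipticgen}'', i.e.\ diagonalize the lifted infinitesimal generator, invoke Lemma \ref{torus} (here with $\alpha=1$ and real eigencoordinates $x_2\pm x_3$, $y_2\pm y_3$ instead of $\alpha=i$), translate the generators and the single relation back, and deduce part (b) from Lemma \ref{panlem} since $G_h$ is reductive. Your version simply writes out the details the paper omits (the only cosmetic gap being that Lemma \ref{torus} is stated over $\mathbb{C}$, so one should note that its real-coefficient generators also generate the real invariant algebra), and all the identifications $\psi_j\leftrightarrow\tau_j$ and the resulting relation \eqref{relations-h} check out.
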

\begin{proposition}\label{hyperbolicredprop} 
{\em(a)} The orbit of every point on $T\mathbb H^2$ is a branch of a hyperbola which lies in a two-dimensional affine subspace of $\mathbb R^6$.\\
{\em(b)} The map 
\[
\Gamma:\,V\to \mathbb R^4,\quad z\mapsto\left(\begin{array}{c} \tau_1(z)\\
                                                                                                    \tau_2(z)\\
                                                                                                    \tau_3(z)\\
                                                                                                    \tau_6(z)\end{array}\right),
                                                                                      \]
where we denote with $ w = ( w _1, w _2 , w _3, w _4)^{tr}$ the image of $z$ under $ \Gamma $, sends $V$ to the variety $Y\subseteq \mathbb R^4$ which is defined by 
\begin{equation}\label{hypvariety}
w_4^2=(w_1^2+1)w_3-w_2^2.
\end{equation}
{\em (c)} The image of $T\mathbb H^2$ under the map  $ \Gamma $ is equal to $Y$, hence
 $\Gamma(V)=Y$.\\
{\em(d)} The Poisson-Dirac bracket on $V$ induces a Poisson bracket $\left\{\cdot,\cdot\right\}^\prime$ on $Y$, which is determined by 
\[
\{w_1,w_2\}^\prime=1+w_1^2, \quad \{w_2,w_3\}^\prime=2w_1w_3, \quad \{w_3,w_1\}^\prime=-2w_2,
\]
and
\[
\quad \{w_i,w_4\}^\prime=0,\,1\leq i\leq 3.
\]
{\em (e)} The equations of motion \eqref{motion} on $\mathbb H^2$ for the Hamiltonian 
\[
H=\frac12 \tau_3 + U(\tau_1)
\]
with axisymmetric potential are mapped by $\Gamma$ to the reduced Hamiltonian system
\[
\begin{array}{rcccl}
\dot w_1&=&\left\{w_1,h\right\}^\prime&=& w_2\\
\dot w_2&=&\left\{w_2,h\right\}^\prime&=& w_1w_3-(1+w_1^2)U^\prime(w_1)\\
\dot w_3&=&\left\{w_3,h\right\}^\prime&=& -2w_2U^\prime(w_1)\\
\dot w_4&=&\left\{w_4,h\right\}^\prime&=& 0
\end{array}
\]
on $Y$, with $h(w):= \frac12 w_3+U(w_1)$. This system admits the first integrals  $h$ and $w_4$. 
\end{proposition}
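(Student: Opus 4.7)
The plan is to mirror Proposition~\ref{ellipticredprop} part-for-part, substituting the hyperbolic invariants $\tau_i$ from Lemma~\ref{hyperbolicgen} for the elliptic $\sigma_i$. Two structural differences will simplify rather than complicate the argument: the group $G_h$ is non-compact but still reductive (so Lemma~\ref{panlem} applies as before), and $G_h$ has no fixed points on $T\mathbb H^2$, because the pointwise-fixed $x_1$-axis misses the sheet $x_3>0$.

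For~(a), I would write out $R_h(s)\cdot(u,v)^{\rm tr}$ directly: $x_1=u_1$ and $y_1=v_1$ are fixed, while $(x_2,x_3,y_2,y_3)^{\rm tr}=\cosh(s)\,(u_2,u_3,v_2,v_3)^{\rm tr}+\sinh(s)\,(u_3,u_2,v_3,v_2)^{\rm tr}$ parameterizes one branch of a hyperbola (using $\cosh^2-\sinh^2=1$ and $\cosh\ge 1$) in a two-dimensional affine subspace. Linear dependence of the two generating vectors forces $x_2=\pm x_3$, and hence $x_1^2=-1$ via $\langle x,x\rangle_L=-1$; simultaneous vanishing forces $x_3=0$. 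Both are ruled out on $T\mathbb H^2$, so each orbit is a genuine hyperbola branch. Part~(b) is pure algebra: substituting $\tau_5=-1-\tau_1^2$ and $\tau_4=-\tau_1\tau_2$ from~\eqref{restriction-h} into~\eqref{relations-h} gives $\tau_6^2=(1+\tau_1^2)\tau_3-\tau_2^2$, which is~\eqref{hypvariety}.

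For~(c), I would construct, for every $w\in Y$, the explicit preimage $x_1=w_1$, $y_1=w_2$, $x_2=0$, $x_3=\sqrt{1+w_1^2}$, $y_2=w_4/\sqrt{1+w_1^2}$, $y_3=w_1w_2/\sqrt{1+w_1^2}$. Because $1+w_1^2>0$ unconditionally, there are no boundary cases: $\langle x,x\rangle_L=-1$, $x_3>0$, and $\langle x,y\rangle_L=0$ hold by direct substitution, and $\tau_3(x,y)=w_3$ follows once the defining equation $w_4^2=(1+w_1^2)w_3-w_2^2$ is used. Hence $\Gamma(T\mathbb H^2)=Y$, sharper than the corresponding elliptic statement. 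For~(d), I would compute $\{\tau_i,\tau_j\}^*$ with Lemma~\ref{dirac} and simplify modulo the Casimirs: for example $\{\tau_1,\tau_2\}^*=1+x_1^2=1+\tau_1^2$, $\{\tau_3,\tau_1\}^*$ reduces to $-2y_1=-2\tau_2$ after the $\langle x,y\rangle_L=0$ term is dropped, $\{\tau_2,\tau_3\}^*=2\tau_1\tau_3$, and the four $\tau_6$-brackets collapse to zero. Proposition~\ref{poiredprop} then delivers the induced $\{\cdot,\cdot\}'$ on $Y$, and part~(e) is then immediate from Hamilton's equations, with $w_4$ a Casimir of the reduced bracket.

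The only step that demands any care is the bracket computation in~(d), where sign-tracking in the Minkowski form and the discarding of Casimir terms must be handled carefully, especially in the four $\tau_6$-brackets where several partial products have to cancel. This is nevertheless purely mechanical; the hyperbolic case avoids the intricate inequality analysis in~(c) that was needed for the elliptic reduction, precisely because $1+w_1^2$ never vanishes and $G_h$ acts without fixed points on $T\mathbb H^2$.
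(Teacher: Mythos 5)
Your proposal is correct and follows essentially the same route as the paper: explicit parameterization of the orbit via the $\cosh s$/$\sinh s$ decomposition for (a), substitution of the relations \eqref{restriction-h} into \eqref{relations-h} for (b), the same explicit preimage (with $x_2=0$, $x_3=\sqrt{1+w_1^2}$) for (c), and direct Dirac-bracket computations combined with Proposition \ref{poiredprop} for (d) and (e). The only cosmetic difference is that you write down $y_2=w_4/\sqrt{1+w_1^2}$ outright where the paper solves $y_2^2=w_4^2/(1+w_1^2)\geq 0$; both verifications check out.
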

\begin{proof} For part (a), let $(u,v)^{\rm tr}\in \mathbb R^6$. The points $(x,y)^{\rm tr}$ in its orbit satisfy $x_1=u_1$, $y_1=v_1$ and
\[
\left(\begin{array}{c}x_2\\
                               x_2\\
                               y_2\\
                                y_3\end{array}\right)=\cosh s\left(\begin{array}{c}u_2\\
                               u_3\\
                               v_2\\
                                v_3\end{array}\right) +\sinh s\left(\begin{array}{c} u_3\\
                               u_2\\
                               v_3\\
                                v_2\end{array}\right).
\]
This is obviously a branch of a hyperbola in a two-dimensional affine subspace whenever the two vectors on the right hand side are linearly independent. But linear independence follows from the relation $u_1^2+u_2^2-u_3^2=-1$ on $T\mathbb H^2$.\\
Part (b) follows from Lemma \ref{hyperbolicgen}.\\
To prove (c), let $w\in Y$. For an inverse image one has the conditions
\[
\begin{array}{rcl}
 x_1&=& w_1\\
y_1&=& w_2\\
y_2^2-y_3^2&=& w_3-w_2^2\\
x_3y_2-x_2y_3&=&w_4
\end{array}
\]
and moreover
\[
\begin{array}{rcl}
 x_2^2-x_3^2&=& -(1+w_1^2)\\
x_2y_2-x_3y_3&=&-w_1w_2
\end{array}
\]
Now choose $x_1=w_1$, $y_1=w_2$, $x_2=0$, $x_3=\sqrt{1+w_1^2}$, $y_3=w_1w_2/\sqrt{1+w_1^2}$. To find an inverse image one needs $y_2$ such that
\[
\begin{array}{rcl}
 y_2^2&=&w_3-w_2^2-w_1^2w_2^2/(1+w_1^2)\\
&=& (1+w_1^2)^{-1}\left((1+w_1^2)(w_3-w_2^2)-w_1^2w_2^2\right)\\
 &=& (1+w_1^2)^{-1}w_4^2.
\end{array}
\]
But the last expression is $\geq 0$ in view of \eqref{hypvariety}, and from this one obtains an inverse image.\\
The remainder of the proof is analogous to Proposition  \ref{ellipticredprop}.
\end{proof}
Similar to the elliptic case, we further consider the map
\[
\widetilde\Gamma:\, V\to \mathbb R^3,\quad z\mapsto\left(\begin{array}{c} \tau_1(z)\\
                                                                                                    \tau_2(z)\\
                                                                                                    \tau_3(z)
                                                                                                    \end{array}\right),
\]
composing $\Gamma$ and the projection from $\mathbb R^4$ to $\mathbb R^3$ onto the first three entries. This means restricting the system in part (e) of Proposition \ref{hyperbolicredprop} to the
system on $\mathbb R^3$ for $w_1$, $w_2$ and $w_3$, with first integrals $j^2=(w_1^2+1)w_3-w_2^2$ and $h$. 
\begin{corollary}\label{hyperbolicredmap}
{\em (a)} A point $w=(w_1,\,w_2,\,w_3)^{\rm tr}\in\mathbb R^3$ lies in $\widetilde\Gamma(T\mathbb H^2)$ if and only if 
\[
 (w_1^2+1)w_3-w_2^2\geq 0.
\]
{\em(b)} The inverse image of $w$ is the union of two hyperbola branches whenever $(w_1^2+1)w_3-w_2^2>0$, and a branch of a hyperbola whenever $(w_1^2+1)w_3-w_2^2=0$ .
\end{corollary}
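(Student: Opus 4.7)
The strategy parallels the proof of Corollary \ref{ellipticredmap}, with one essential difference: since $G_h$ is not compact, the standard fact that Hilbert-map fibers of a compact action are single orbits is unavailable, and I must verify the orbit structure of the fibers by direct computation.

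For part (a), factor $\widetilde\Gamma = \pi \circ \Gamma$, where $\pi : \mathbb{R}^4 \to \mathbb{R}^3$ forgets the last coordinate. Since $\Gamma(T\mathbb{H}^2) = Y$ by Proposition \ref{hyperbolicredprop}(c), a point $w \in \mathbb{R}^3$ lies in $\widetilde\Gamma(T\mathbb{H}^2)$ iff $\pi^{-1}(w) \cap Y \neq \emptyset$; the defining equation \eqref{hypvariety} then shows this is equivalent to $(w_1^2+1)w_3 - w_2^2 \geq 0$. Moreover, $\pi^{-1}(w) \cap Y$ consists of the two points with $w_4 = \pm\sqrt{(w_1^2+1)w_3 - w_2^2}$ when that radicand is strictly positive, and of the single point $w_4 = 0$ when it vanishes.

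To conclude part (b), it therefore suffices to show that for every $w' \in Y$ the preimage $\Gamma^{-1}(w') \cap T\mathbb{H}^2$ is exactly one $G_h$-orbit, because Proposition \ref{hyperbolicredprop}(a) then identifies each such orbit as a hyperbola branch and the counting statement follows immediately. I would prove this by fixing $w' = (w_1,w_2,w_3,w_4) \in Y$, taking any $(x,y) \in \Gamma^{-1}(w') \cap T\mathbb{H}^2$, and exploiting the fact that $G_h$ acts transitively on the upper branch $\{x_3^2 - x_2^2 = 1+w_1^2,\ x_3 > 0\}$ to apply a suitable $R_h(s)$ that brings the point into the normal form $x_2 = 0$, $x_3 = \sqrt{1+w_1^2}$. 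In this normal form the linear constraints $x_2 y_2 - x_3 y_3 = -w_1 w_2$ and $x_3 y_2 - x_2 y_3 = w_4$ pin down $y_2$ and $y_3$ uniquely, while the remaining quadratic condition $y_2^2 - y_3^2 = w_3 - w_2^2$ reduces, after substitution, precisely to the equation \eqref{hypvariety} defining $Y$ and is therefore automatic. Hence any two elements of $\Gamma^{-1}(w') \cap T\mathbb{H}^2$ differ by an element of $G_h$.

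The main obstacle is this uniqueness-up-to-$G_h$ step: for non-compact reductive group actions, Hilbert-map fibers can in general collapse several orbits sharing a common closed orbit in their closures, so the single-orbit property cannot be invoked by general principles. The explicit transitive action of $G_h$ on the hyperboloid sheet $\{x_3^2 - x_2^2 = 1+w_1^2,\ x_3 > 0\}$ provides a concrete workaround that avoids any appeal to the (true but unused) properness of the restricted $G_h$-action on $T\mathbb{H}^2$.
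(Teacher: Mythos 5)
Your proof is correct, but it takes a genuinely different route from the paper's at the decisive step. For part (b) the paper deliberately ``invokes some theory to avoid computations'': it notes that the $G_h$-orbits on $T\mathbb H^2$ are closed in the norm topology (being hyperbola branches, by Proposition \ref{hyperbolicredprop}(a)), concludes via Birkes \cite{Birkes} that the complexified orbits are Zariski-closed, and then applies Luna's theorem \cite{Luna2} that closed orbits of a reductive group correspond bijectively to points of the quotient variety under the Hilbert map; returning to the real picture gives the $1$-$1$ correspondence between orbits and points of $Y$. You instead establish the single-orbit property of each fiber by hand: using the simply transitive action of $G_h$ on the branch $\{x_3^2-x_2^2=1+w_1^2,\ x_3>0\}$ to normalize to $x_2=0$, $x_3=\sqrt{1+w_1^2}$, and checking that the invariants then pin down all remaining coordinates, with the quadratic relation $y_2^2-y_3^2=w_3-w_2^2$ automatic from \eqref{hypvariety} --- a computation that essentially re-runs the surjectivity argument of Proposition \ref{hyperbolicredprop}(c) while tracking uniqueness. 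Both arguments are sound. Yours is elementary and self-contained; the paper's is shorter on the page but rests on two nontrivial results from the theory of algebraic group actions. Your closing observation that Hilbert-map fibers of non-compact reductive actions may in general contain several orbits correctly identifies why the compact-group argument of Corollary \ref{ellipticredmap} does not transfer here; the paper's appeal to closedness of the orbits plus Birkes and Luna is precisely its answer to that same concern.
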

\begin{proof} Part (a) is a direct consequence of Proposition \ref{hyperbolicredprop}. To prove part (b), one can invoke some theory to avoid computations. As shown in Proposition \ref{hyperbolicredprop}, the group orbits on $T\mathbb H^2$ are closed in the norm topology. This implies, by Birkes \cite{Birkes}, that the corresponding orbits in the complexification of $V$ are closed in the Zariski topology. Due to a result of Luna \cite{Luna2} the closed orbits on the complexification of $V$ stand in 1-1 correspondence with the points of the complexification of $Y$, via $\Gamma$. Going back to the real setting, one has a 1-1 correspondence between the orbits on $T\mathbb H^2$ and points on $Y$.
\end{proof}
The reduced equations of motions on $Y=\widetilde\Gamma(T\mathbb H^2)$ are 
\begin{equation}\label{hyperbolicredsys}
\begin{array}{rcl}
\dot w_1&=& w_2\\
\dot w_2&=& w_1w_3-(1+w_1^2)U^\prime(w_1)\\
\dot w_3&=& -2w_2U^\prime(w_1)
\end{array}
\end{equation}
with inequality $(w_1+1)^2w_3-w_2^2\geq 0$. We will again discuss them for rational potential $U$. The proof is a straightforward variant of Proposition \ref{ellipticredana}.

\begin{proposition}\label{hyperbolicredana}
{\em (a)} Relative equilibria: The stationary points of \eqref{hyperbolicredsys} are given by
\[
 z_\rho:=\left(\begin{array}{c}\rho\\
                               0\\
                                \frac{\rho^2+1}{\rho}U^\prime(\rho)\end{array}\right)
\]
with either $\rho>0$ and $U^\prime(\rho)\geq 0$, or  $\rho<0$ and $U^\prime(\rho)\leq 0$. In the special case $U^\prime(0)=0$ there are additional stationary points
\[
z^*_\sigma= \left(\begin{array}{c}0\\
                               0\\
                                \sigma\end{array}\right), \quad \sigma\geq 0.
\]
The inverse image of $z_\rho$ under $\widetilde\Gamma$ is a branch of a hyperbola, which is a trajectory of \eqref{motion} in case $U^\prime(\rho)\not=0$, and consists of stationary points only in case $U^\prime(\rho)=0$. The inverse image of $z^*_\sigma$ is a branch of a hyperbola, which consists of stationary points only.\\
{\em(b)} Every nonstationary solution of \eqref{hyperbolicredsys} is contained in a level set
\[
\begin{array}{rclcl}
j^2&:=&(w_1^2+1)w_3-w_2^2&=& c_1\\
h&:=&\frac12 w_3+U(w_1)&=& c_2
\end{array}
\]
with real constants $c_1\geq 0$ and $c_2$, which is smooth at every nonstationary point of the reduced system. If this level set contains no stationary point then it is a one-dimensional submanifold and every connected component is a trajectory of \eqref{hyperbolicredsys}. If the level set contains a stationary point then the complement of the stationary point set is a union of one-dimensional submanifolds which are orbits, with limit sets contained in the set of stationary points.\\
{\em (c)} The inverse image of a nonstationary solution of \eqref{hyperbolicredsys} on the level set $j^2=c_1$, $h=c_2$ is an invariant set of \eqref{motion} which is diffeomorphic to a cylinder if the solution of the reduced system is bounded, and diffeomorphic to a plane otherwise.
\end{proposition}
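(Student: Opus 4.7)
The plan is to follow the pattern of the proof of Proposition \ref{ellipticredana}, replacing the elliptic constraints $w_1 \geq 1$ and $(w_1^2-1)w_3 - w_2^2 \geq 0$ by the single hyperbolic admissibility condition $(w_1^2+1) w_3 - w_2^2 \geq 0$ from Corollary \ref{hyperbolicredmap}(a), and substituting the reduced equations \eqref{hyperbolicredsys} for \eqref{ellipticredsys}. The change of sign in $w_1^2 \pm 1$ is the main algebraic difference, together with the fact that $w_1 = 0$ is now admissible, which is what produces the additional family $z^*_\sigma$.

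For part (a), I would set $\dot w_1 = \dot w_2 = \dot w_3 = 0$ in \eqref{hyperbolicredsys}. The first equation forces $w_2 = 0$, which makes the third automatic, and the second becomes $w_1 w_3 = (1+w_1^2) U'(w_1)$. I would then split into two cases: if $w_1 = \rho \neq 0$, then $w_3 = (\rho^2+1) U'(\rho)/\rho$, and the admissibility condition, which reduces to $w_3 \geq 0$ once $w_2 = 0$, becomes $U'(\rho)/\rho \geq 0$, yielding the stated sign conditions on $\rho$; if $w_1 = 0$, the same equation forces $U'(0) = 0$, and admissibility requires only $w_3 = \sigma \geq 0$, producing the family $z^*_\sigma$. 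The $\widetilde\Gamma$-preimages are then read off Corollary \ref{hyperbolicredmap}(b) together with Proposition \ref{hyperbolicredprop}(a); whether a preimage orbit is a nontrivial trajectory of \eqref{motion} or consists entirely of equilibria is controlled by whether $U'(\rho)$ vanishes.

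For part (b), I would first check that $h$ and $j^2$ are integrals of \eqref{hyperbolicredsys} by direct substitution, the three terms in the time derivative of $j^2$ cancelling in pairs. Next, the Jacobian
\[
\frac{\partial(j^2, h)}{\partial(w_1, w_2, w_3)} = \begin{pmatrix} 2 w_1 w_3 & -2 w_2 & w_1^2 + 1 \\ U'(w_1) & 0 & 1/2 \end{pmatrix}
\]
has three $2 \times 2$ minors equal, up to sign, to the components of the reduced vector field \eqref{hyperbolicredsys}; hence it has rank $2$ precisely at nonstationary points, and the level set of $(j^2, h)$ is a smooth one-dimensional submanifold wherever it contains no equilibrium. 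A connected boundaryless one-manifold is diffeomorphic to either $\mathbb R$ or $S^1$, and standard ODE arguments on this one-manifold yield the statements about limit sets at equilibria.

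For part (c), I would regard the inverse image $\widetilde\Gamma^{-1}(\gamma)$ of a nonstationary reduced trajectory $\gamma$ as a union of $G_h$-orbits. Corollary \ref{hyperbolicredmap}(b) gives one preimage orbit above each point of $\gamma$ if $c_1 = 0$ and two if $c_1 > 0$, and each such orbit is a hyperbola branch diffeomorphic to $\mathbb R$ by Proposition \ref{hyperbolicredprop}(a). Since $\mathbb H^2$ contains no point of the $x_1$-axis, the $G_h$-action on $T\mathbb H^2$ is free, so each connected component of $\widetilde\Gamma^{-1}(\gamma)$ is a locally trivial $\mathbb R$-bundle over $\gamma$; any such bundle over $S^1$ or $\mathbb R$ is globally trivial, giving a cylinder or a plane according to whether $\gamma$ is bounded or not. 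The main obstacle I anticipate is cleanly organizing the case distinction $c_1 = 0$ versus $c_1 > 0$, in the latter of which $\widetilde\Gamma^{-1}(\gamma)$ has two connected components that must each be shown to be a cylinder or plane; a secondary novelty absent from the elliptic analog is the family $z^*_\sigma$ in (a), which requires explicit verification that $(0,0,\sigma)^{\rm tr}$ has nonempty $\widetilde\Gamma$-preimage in $T\mathbb H^2$ for $\sigma \geq 0$ and that this preimage is a single $G_h$-orbit of equilibria.
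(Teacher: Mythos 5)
Your proposal is correct and is essentially the paper's own argument: the paper gives no separate proof of Proposition \ref{hyperbolicredana}, stating only that it is ``a straightforward variant of Proposition \ref{ellipticredana}'', and your adaptation (sign changes in $w_1^2\pm 1$, the now-admissible value $w_1=0$ producing the family $z^*_\sigma$, the rank computation for the Jacobian of $(j^2,h)$, and the reconstruction of preimages as unions of hyperbola branches via Corollary \ref{hyperbolicredmap}) is exactly that variant, with all computations checking out. Your observation that the $2\times 2$ minors of the Jacobian are, up to sign, the components of the reduced vector field is a slightly cleaner way to obtain the smoothness criterion than the column-by-column argument used in the elliptic proof, but it is not a different route.
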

\begin{remark}(a) One can determine the level sets of $(j^2,h)$ directly from
\[
2\left(c_2-U(w_1)\right)=w_3=\frac{w_2^2+c_1}{w_1^2+1}
\]
and thus
\[
w_2^2=2\left(c_2-U(w_1)\right)\left(w_1^2+1\right)-c_1.
\]
(b) While the reduction procedure is initially restricted to rational potentials $U$, a theorem by Luna \cite{Luna} allows extension to analytic potentials; see also the Remark at the end of the Appendix.
\end{remark}
\begin{example} For linear potential, thus $U(x_1)= c\cdot x_1$ with $c\not=0$, system  \eqref{hyperbolicredsys} admits equilibria
\[
z_\rho=\left(\begin{array}{c} \rho\\
                                              0\\
                                          c(\rho^2+1)/\rho\end{array}\right), \quad\rho\not=0,\,c\rho>0.
\]
\begin{figure}[h]
   \centering
    \includegraphics[width=0.7\textwidth]{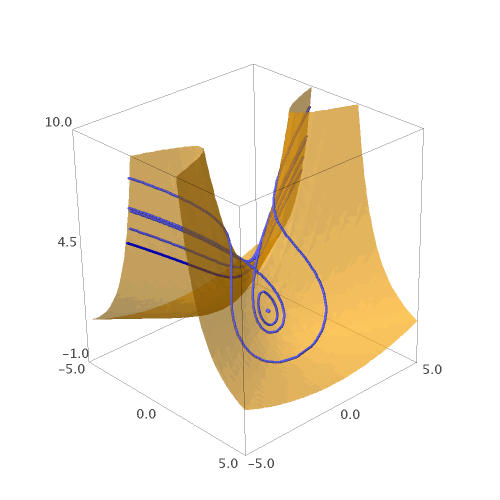}
 \caption{Trajectories of the reduced system in the hyperbolic case  with a linear potential with  $ c = 1 $ and $ c _1 = 4 $ \label{fig:hyperbolic_case}}
\end{figure}
The corresponding solutions of \eqref{motion} can be interpreted as motion on a hyperbola branch. 
We will discuss the trajectories of  \eqref{hyperbolicredsys} in some detail, starting from the conditions
\[
\begin{array}{c}
w_2^2=2(w_1^2+1)\left(c_2-cw_1\right)-c_1,\\
     w_3=2\left(c_2-cw_1\right).
\end{array}
\]
\begin{figure}[h!]
   \centering
    \includegraphics[width=0.7\textwidth]{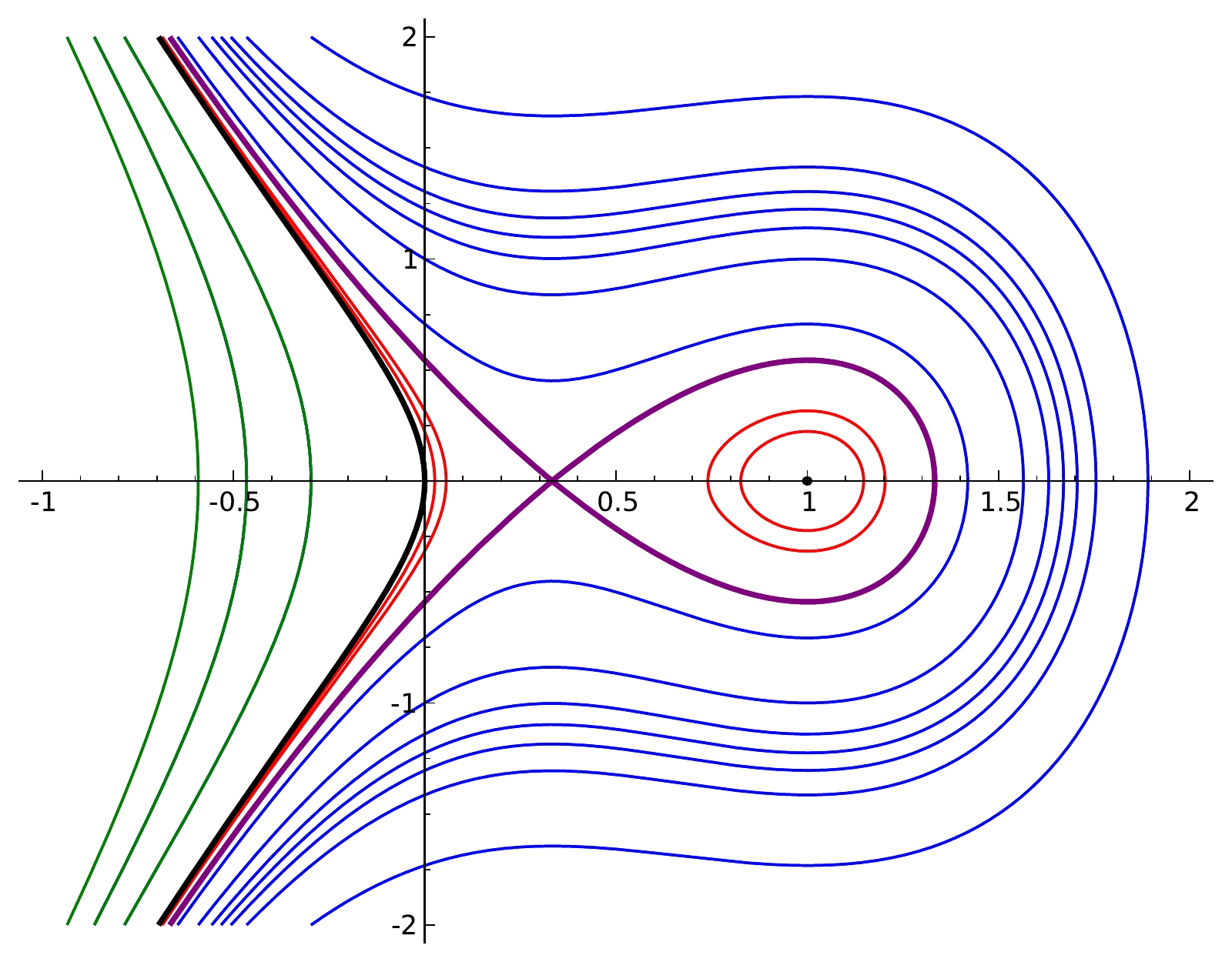}
 \caption{Example of the sets of points satisfying the equation  $w_2^2=2(w_1^2+1)\left(c_2-cw_1\right)-c_1$ in Case 3.\label{fig:case3}}
\end{figure}

It is sufficient to consider the projection of the trajectory onto the plane $w_3=0$, which is obviously diffeomorphic to the trajectory. Thus, only the first set of conditions needs to be taken into account. One sees immediately that these projections are subsets of elliptic curves or degenerate cubic curves. 
Note that the curves have singular points for $w _2 = 0 $ and  $ w _1 = \frac{ c _2 \pm \sqrt { c _2 ^2 - 3 c ^2 } } { 3c } $.\\
{\em Case 1: $-\sqrt{3} c < c_2< \sqrt{3} c$, $ c_1 \geq 0$}. In this case no stationary points exist, and the set of points satisfying the condition 
is an unbounded connected component of an elliptic curve, and equal to a trajectory.\\
{\em Case 2: $c_2=\pm \sqrt{3}c$, $ c_1>0$}.\\ 
$\bullet$ If $c_1=\pm \frac{16}{9}\sqrt{3} c $ then the curve is degenerate and the set consists of two branches approaching $(w _1 , w _2 )=(\pm \frac{ 1 }{ \sqrt{3} },0 )$ ; each branch is equal to a trajectory .\\
$\bullet$ If $c_1\neq\pm \frac{16}{9}\sqrt{3} c $ the set of points satisfying the condition is an unbounded connected component of an elliptic curve, and equal to a trajectory.\\
{\em Case 3: $-\sqrt{3} c > c_2$ or $c_2> \sqrt{3} c$, $ c_1>0$}. In this case  there are two stationary points corresponding to  the following values of $(w_1,w_2)$: $\left( \frac{ c _2 + \sqrt { c _2 ^2 - 3 c ^2 } } { 3c } ,0\right )$ and $\left( \frac{ c _2 - \sqrt { c _2 ^2 - 3 c ^2 } } { 3c } ,0\right)$. The stationary points are a center and a saddle, respectively. Let $c_1^+$ the value of $c_1$ corresponding to the center, and $c_1^- $ the value of $ c _1 $ corresponding to the saddle, where we have $ c _1^- < c _1^+ $. The sets of points satisfying the condition are depicted in Figure \ref{fig:case3}. \\ 
$\bullet$ If $c_1<c_1^- $  then the set of points satisfying the condition is an unbounded connected component of an elliptic curve, and equal to a trajectory (see the blue curves in Figure \ref{fig:case3}).\\ 
$\bullet$ If $c_1=c_1^- $ the curve is degenerate, and the set consists of two unbounded branches approaching
$\left( \frac{ c _2 - \sqrt { c _2 ^2 - 3 c ^2 } } { 3c } ,0\right)$, and an homoclinic loop. Each of these branches is a trajectory (these are the purple curves in Figure \ref{fig:case3}).\\
$\bullet$ If $c_1^-<c_1<c_1^+ $ then the set is the union of a bounded  and an unbounded connected components of an elliptic curve, each of which is equal to a trajectory (these are the red curves in Figure \ref{fig:case3}). The bounded component is a closed loop.  \\
$\bullet$ If $c_1=c_1^+ $ the curve is degenerate, and the set consists of the union of an unbounded branch and a single point, each of them being a trajectory  (see the black curves in Figure \ref{fig:case3}).\\ 
$\bullet$ If $c_1>c_1^+ $ then the set is an unbounded connected component of an elliptic curve, and equal to a trajectory (see the green curves in Figure \ref{fig:case3}).\\ 
\end{example}

\subsection{Parabolic Rotations}
Finally we discuss the potentials which depend only on $x_2-x_3$, i.e. $U(x)=U(x_2-x_3)$. Such potentials are invariant with respect to the group $G=G_p\cong (\mathbb R,\,+)$ of parabolic rotations

\beq R_p(s)=\begin{pmatrix}
1 & -s & s \\ 
s& 1-s^2/2 & s^2/2 \\ 
s& -s^2/2 & 1+s^2/2 
\end{pmatrix}, 
\eeq
The tangent lift of $R_p(t)$ gives a $G$-action on $T\mathbb H^2$
\[
\Phi_p:G\times T\mathbb H^2\to T\mathbb H^2:(x,y)\to (R_p(t)x, R_p(t)y)
\]
with infinitesimal generator $Y_p$ whose integral curves satisfy

\beq
\begin{split}
 \frac{dx}{dt}&=-x\times_L (e_2+e_3)\\
 \frac{dy}{dt}&=-y\times_L (e_2+e_3)\\
\end{split}
\eeq
Thus $Y_p$ is the Hamiltonian vector field corresponding to the Hamiltonian
\beq
J_p: T\mathbb{R}^{2,1}\to \mathbb{R}:(x,y)\to \langle x\times_Ly,e_2+e_3\rangle=-x_1(y_3-y_2)+y_1(x_3-x_2)
\eeq
which is the momentum mapping of the action $\Phi_h$. One can check that
\[
\{J_p,H\}^*|_{T\mathbb H^2}=0.
\]
that is, $J_p|{T\mathbb H^2}$ is an integral of motion. Therefore the Hamiltonian system that describes the motion of a particle on the hyperbolic plane under the influence of an axisymmetric potential $U(x_2-x_3)$ is Liouville integrable.

The parabolic transformations generate an unipotent one-parameter group, therefore the theory for reductive groups is not applicable.
 However, this group is a maximal parabolic subgroup of the Lorentz group, hence one still has finite generation of the invariants for the action on $\mathbb R^6$; see e.g.  
 Had\v{z}iev \cite{Hadziev}, Vinberg and Popov \cite{Vinberg}, Grosshans \cite{Grosshans83}. Moreover, the {\sc Singular} library  {\tt ainvar.lib}  \cite{PG09} (see also \cite{GPS09}) provides an algorithm for the computation of generators. One obtains (up to some normalizations)

\begin{lemma}\label{pargen}
{\em (a)} The algebra $\mathbb{R}[x,y]^G$ of $G$-invariant polynomials in $(x,y)$ is generated by 
\begin{align*}
\zeta_1 &= x_2-x_3 &\zeta_2 &=y_2-y_3 \\
 \zeta_3 &=y_1^2+y_2^2-y_3^2 & \zeta_4 &=x_1y_1+2x_2y_2-x_2y_3-x_3y_2   \\
\zeta_5 &= x_1^2-2x_2x_3 +2x_2^2    & \zeta_6 &=x_1(y_2-y_3)+y_1(x_3-x_2)
\end{align*}
subject to the relation
\beq
\zeta_1^2\zeta_2^2+\zeta_1^2\zeta_3-2\zeta_1\zeta_2\zeta_4+\zeta_2^2\zeta_5-\zeta_6^2=0.
\label{relations-p}
\eeq
{\em (b)} Restriction from $T\mathbb R^{2,1}$ to $V$ imposes the extra conditions
\beq
\zeta_5-\zeta_1^2=-1,\quad \mbox{ and }\zeta_4-\zeta_1\zeta_2=0,
\label{restriction-p}
\eeq
and rewriting relation (\ref{relations-p}) by eliminating $\zeta_4$ and $\zeta_5$, one finds
\beq
\zeta_1^2\zeta_3-\zeta_2^2-\zeta_6^2=0.
\label{variety-p}
\eeq
Thus the restrictions of the $\zeta_i$ to $V$ (which are denoted by the same symbols) form a subalgebra of $\mathbb R[V]^G$ which is generated by $\zeta_1$, $\zeta_2$, $\zeta_3$ and $\zeta_6$. 
\end{lemma}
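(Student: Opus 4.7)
My plan has four ingredients: (i) verify invariance of each $\zeta_i$; (ii) derive the relation (\ref{relations-p}); (iii) conclude finite generation from the unipotent-invariant theory cited in the text, checked against the {\sc Singular} output; (iv) obtain (b) by reducing modulo the defining equations of $V$.

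For (i) I would start from the explicit transformation rules under $R_p(s)$, which give $x_2'-x_3'=x_2-x_3$, $y_2'-y_3'=y_2-y_3$, $x_1'=x_1-s(x_2-x_3)$, and $y_1'=y_1-s(y_2-y_3)$. Hence $\zeta_1$ and $\zeta_2$ are manifestly invariant, and the cancellation
\[
x_1'\zeta_2-y_1'\zeta_1=(x_1-s\zeta_1)\zeta_2-(y_1-s\zeta_2)\zeta_1=x_1\zeta_2-y_1\zeta_1
\]
gives invariance of $\zeta_6$. The remaining generators are best recognized through the identities $\zeta_3=\langle y,y\rangle_L$, $\zeta_5=\langle x,x\rangle_L+\zeta_1^2$, and $\zeta_4=\langle x,y\rangle_L+\zeta_1\zeta_2$, which I would verify by expansion; since $\langle\cdot,\cdot\rangle_L$ is ${\rm Lor}(2,1)$-invariant and $\zeta_1,\zeta_2$ are $G_p$-invariant, the invariance of $\zeta_3,\zeta_4,\zeta_5$ follows for free.

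Step (ii) is then a slick algebraic manipulation. Substituting the above identities into the left-hand side of (\ref{relations-p}), all $\zeta_1^2\zeta_2^2$ contributions collect and one is left with
\[
\zeta_1^2\langle y,y\rangle_L-2\zeta_1\zeta_2\langle x,y\rangle_L+\zeta_2^2\langle x,x\rangle_L-\zeta_6^2=\langle \zeta_1 y-\zeta_2 x,\,\zeta_1 y-\zeta_2 x\rangle_L-\zeta_6^2.
\]
The vector $\zeta_1 y-\zeta_2 x$ has equal second and third components (their difference is $\zeta_1\zeta_2-\zeta_2\zeta_1=0$), so its Lorentz square reduces to the square of its first entry, which is exactly $\zeta_6^2$. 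The relation follows.

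The main obstacle is (iii), proving that $\zeta_1,\ldots,\zeta_6$ actually generate $\mathbb R[x,y]^{G}$. Because $G_p$ is unipotent, Lemma \ref{panlem} is not available; instead I would invoke Had\v{z}iev \cite{Hadziev}, Vinberg-Popov \cite{Vinberg}, and Grosshans \cite{Grosshans83} (as the paper already flags) to guarantee finite generation, and then match against the generators produced by {\tt ainvar.lib} \cite{PG09,GPS09} up to the indicated normalizations. For (b), the constraints $\langle x,x\rangle_L+1=0$ and $\langle x,y\rangle_L=0$ translate via the identities in step (i) into $\zeta_5=\zeta_1^2-1$ and $\zeta_4=\zeta_1\zeta_2$, i.e.\ (\ref{restriction-p}); substituting into (\ref{relations-p}) eliminates $\zeta_4$ and $\zeta_5$ and, after cancellation, collapses to $\zeta_1^2\zeta_3-\zeta_2^2-\zeta_6^2=0$, which is (\ref{variety-p}). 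Since $\zeta_4,\zeta_5$ now lie in the subalgebra generated by $\zeta_1,\zeta_2$, the last claim of (b) is immediate.
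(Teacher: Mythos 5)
Your proposal is correct, and it actually supplies more detail than the paper, which states Lemma \ref{pargen} with no proof at all beyond the preceding remark that finite generation follows from Had\v{z}iev, Vinberg--Popov and Grosshans, and that the generators are produced (up to normalization) by {\tt ainvar.lib}. Your steps (i), (ii) and (iv) are clean verifications that the paper leaves implicit: the identities $\zeta_3=\langle y,y\rangle_L$, $\zeta_4=\langle x,y\rangle_L+\zeta_1\zeta_2$, $\zeta_5=\langle x,x\rangle_L+\zeta_1^2$ check out by direct expansion, the collapse of the left side of (\ref{relations-p}) to $\langle \zeta_1 y-\zeta_2 x,\,\zeta_1 y-\zeta_2 x\rangle_L-\zeta_6^2$ with $(\zeta_1 y-\zeta_2 x)_2=(\zeta_1 y-\zeta_2 x)_3$ and first entry $-\zeta_6$ is a genuinely nicer derivation of the relation than brute-force expansion, and the deduction of (\ref{restriction-p}) and (\ref{variety-p}) is exactly the intended one. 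For step (iii) you rely on the same external sources (the cited finite-generation theorems plus the {\sc Singular} computation) that the paper relies on, so you are no worse off than the authors; note only that, like the paper, you do not independently establish that these six polynomials generate the full invariant algebra or that (\ref{relations-p}) generates the entire ideal of relations --- both claims ultimately rest on the algorithmic computation. You are also right to observe that the final claim of (b) is deliberately phrased in terms of a subalgebra of $\mathbb R[V]^G$, so no analogue of Lemma \ref{panlem} is needed there.
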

We do not (and need not) discuss at this point whether this subalgebra is equal to the invariant algebra for the action of $G$ on the variety $V$. For the purpose of reduction the following observation is sufficient:
\begin{lemma}\label{parexperiment}
 The restrictions of the $\zeta_i$ to the variety $V$ satisfy the identities
\[
\begin{array}{l}
\{\zeta_1,\zeta_2\}^*=\zeta_1^2, \quad \{\zeta_2,\zeta_3\}^*=2\zeta_1\zeta_3, \quad \{\zeta_3,\zeta_1\}^*=-2\zeta_2,\\
\mbox{  and    }\{\zeta_i,\zeta_6\}^*=0,\quad 1\leq i\leq 3.
\end{array}
\]
\end{lemma}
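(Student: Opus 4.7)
The plan is to split the statement into two parts. The identities $\{\zeta_i,\zeta_6\}^*=0$ for $i=1,2,3$ admit a clean conceptual proof via the momentum-map formalism, while the three bracket identities among $\zeta_1,\zeta_2,\zeta_3$ reduce to polynomial bookkeeping modulo the defining relations of $V$.

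For the identities involving $\zeta_6$: a direct comparison shows $\zeta_6=J_p$, the momentum map introduced in this subsection, whose Hamiltonian vector field $X_{J_p}$ generates the $G$-action on $T\mathbb{R}^{2,1}$. Since every element of $G\subset{\rm Lor}(2,1)$ preserves both constraints $c_1=\langle x,x\rangle_L+1$ and $c_2=\langle x,y\rangle_L$, we have $\{c_j,J_p\}=X_{J_p}(c_j)=0$ identically on $T\mathbb{R}^{2,1}$. Consequently the Dirac correction term vanishes and $\{f,\zeta_6\}^*=\{f,\zeta_6\}=X_{\zeta_6}(f)$ for any function $f$. Since $\zeta_1,\zeta_2,\zeta_3$ are $G$-invariant by Lemma \ref{pargen}(a), the vector field $X_{\zeta_6}$ annihilates them, yielding the three stated vanishings.

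The three remaining brackets I would compute by the Leibniz rule, reading off elementary entries from the table in Lemma \ref{dirac}. The bracket $\{\zeta_1,\zeta_2\}^*=\{x_2-x_3,\,y_2-y_3\}^*$ expands without invoking any constraint into
$(1+x_2^2)-x_2x_3-x_2x_3+(x_3^2-1)=(x_2-x_3)^2=\zeta_1^2$. For $\{\zeta_3,\zeta_1\}^*$ and $\{\zeta_2,\zeta_3\}^*$ one expands the Leibniz sum and then invokes the two $V$-defining equations $x_1^2+x_2^2-x_3^2=-1$ and $x_1y_1+x_2y_2-x_3y_3=0$. In the former case, after using the second relation to rewrite $x_1y_1$, the expansion collects into $2(y_3-y_2)\bigl[(x_3-x_2)(x_3+x_2)-x_1^2\bigr]=2(y_3-y_2)(x_3^2-x_2^2-x_1^2)$, which equals $2(y_3-y_2)=-2\zeta_2$ on $V$. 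An entirely analogous regrouping in $\{\zeta_2,\zeta_3\}^*$ produces a factor $(x_2-x_3)$ multiplying $y_1^2+y_2^2-y_3^2$, i.e. $2\zeta_1\zeta_3$.

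The main obstacle is simply bookkeeping: each of the nontrivial brackets expands into roughly a dozen monomial terms that must be regrouped before the two $V$-relations can be applied to produce the stated closed form. No conceptual difficulty arises beyond careful algebra and knowing \emph{when} to substitute each constraint.
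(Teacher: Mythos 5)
Your proposal is correct, and it is worth noting that the paper itself states Lemma~\ref{parexperiment} without any proof at all; the implicit model is the elliptic case (part~(d) of Proposition~\ref{ellipticredprop}), where every bracket, including $\{\sigma_6,\sigma_i\}^*=0$, is obtained by brute-force expansion against the Dirac table of Lemma~\ref{dirac}. Your handling of the three brackets with $\zeta_6$ is therefore a genuinely different and more conceptual route: observing that $\zeta_6=J_p$ on the nose, that the constraints $c_1,c_2$ are $O(2,1)$-invariant so the Dirac correction terms $\{\zeta_6,c_j\}$ vanish identically, and that $X_{J_p}$ generates the $G_p$-action and hence annihilates the invariants $\zeta_1,\zeta_2,\zeta_3$. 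This replaces three table computations by one structural observation and generalizes verbatim to the elliptic and hyperbolic cases. For the remaining three brackets your strategy coincides with the paper's, and I have checked that it goes through: $\{\zeta_1,\zeta_2\}^*=(x_2-x_3)^2$ needs no constraint; $\{\zeta_3,\zeta_1\}^*$ expands to $2(y_3-y_2)+2(x_3-x_2)\,\langle x,y\rangle_L$, so only the constraint $c_2=0$ is needed; and $\{\zeta_2,\zeta_3\}^*$ collects, after substituting $x_1y_1=x_3y_3-x_2y_2$, into $2(x_2-x_3)(y_1^2+y_2^2-y_3^2)=2\zeta_1\zeta_3$. One cosmetic inaccuracy: your claimed intermediate form $2(y_3-y_2)\bigl[(x_3-x_2)(x_3+x_2)-x_1^2\bigr]$ for $\{\zeta_3,\zeta_1\}^*$ is not what the Leibniz expansion literally produces (the two expressions differ by an element of the constraint ideal, and the correct reduction uses $c_2$ rather than $c_1$), but both restrict to $-2\zeta_2$ on $V$, so the conclusion stands.
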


\begin{proposition}\label{parabolicredprop} 
{\em(a)} The orbit of every point on $T\mathbb H^2$ is a parabola which lies in a two-dimensional affine subspace of $\mathbb R^6$.\\
{\em(b)} The map 
\[
\Gamma:\,V\to \mathbb R^4,\quad z\mapsto\left(\begin{array}{c} \zeta_1(z)\\
                                                                                                    \zeta_2(z)\\
                                                                                                    \zeta_3(z)\\
                                                                                                    \zeta_6(z)\end{array}\right),
                                                                                      \]
where we denote with $ w = ( w _1, w _2 , w _3, w _4)^{tr}$ the image of $z$ under $ \Gamma $, sends $V$ to the variety $Y\subseteq \mathbb R^4$ which is defined by 
\begin{equation}\label{parvariety}
w_4^2=w_1^2w_3-w_2^2.
\end{equation}
{\em (c)} The image of $T\mathbb H^2$ under the map $ \Gamma $ is equal to 
\[
\left\{w\in Y;\,w_1<0\right\}.
\]
In particular the Zariski closure of $\Gamma(V)$ equals $Y$.\\
{\em(d)} The Poisson-Dirac bracket on $V$ induces a Poisson bracket $\left\{\cdot,\cdot\right\}^\prime$ on $Y$, which is determined by 
\[
\{w_1,w_2\}^\prime=w_1^2, \quad \{w_2,w_3\}^\prime=2w_1w_3, \quad \{w_3,w_1\}^\prime=-2w_2,
\]
and
\[
\quad \{w_i,w_4\}^\prime=0,\,1\leq i\leq 3.
\]
{\em (e)} The equations of motion \eqref{motion} on $T\mathbb H^2$ for the Hamiltonian 
\[
H=\frac12 \zeta_3 + U(\zeta_1)
\]
with axisymmetric potential are mapped by $\Gamma$ to the reduced Hamiltonian system
\[
\begin{array}{rcccl}
\dot w_1&=&\left\{w_1,h\right\}^\prime&=& w_2\\
\dot w_2&=&\left\{w_2,h\right\}^\prime&=& w_1w_3-w_1^2U^\prime(w_1)\\
\dot w_3&=&\left\{w_3,h\right\}^\prime&=& -2w_2U^\prime(w_1)\\
\dot w_4&=&\left\{w_4,h\right\}^\prime&=& 0
\end{array}
\]
on $Y$, with $h(w):= \frac12 w_3+U(w_1)$. This system admits the first integrals  $h$ and $w_4$. 
\end{proposition}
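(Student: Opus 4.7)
The plan is to mirror the structure of the proofs of Propositions \ref{ellipticredprop} and \ref{hyperbolicredprop}, using Lemmas \ref{pargen} and \ref{parexperiment} as the main input and then invoking Proposition \ref{poiredprop} from the Appendix.

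\textbf{Part (a).} I would parametrize the orbit of a point $(u,v)^{\rm tr}\in T\mathbb H^2$ as $s\mapsto (R_p(s)u,R_p(s)v)$. Because every entry of $R_p(s)$ is a polynomial in $s$ of degree at most two, the orbit has the form $s\mapsto A+sB+s^2C$ for fixed vectors $A,B,C\in\mathbb R^6$ determined by $(u,v)$. One checks by direct computation that on $T\mathbb H^2$ the quadratic coefficient $C$ is nonzero (using $\langle u,u\rangle_L=-1$, in particular $u\ne 0$) and that $B,C$ are linearly independent (using $\langle u,v\rangle_L=0$). Hence the orbit traces a non-degenerate parabola in the affine plane $A+\operatorname{span}(B,C)$.

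\textbf{Part (b).} This is immediate from Lemma \ref{pargen}: the relation \eqref{variety-p} says $\zeta_1^2\zeta_3-\zeta_2^2-\zeta_6^2=0$ on $V$, which becomes $w_4^2=w_1^2w_3-w_2^2$ after the identification $w_1=\zeta_1$, $w_2=\zeta_2$, $w_3=\zeta_3$, $w_4=\zeta_6$, so $\Gamma(V)\subseteq Y$.

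\textbf{Part (c).} For the inclusion $\Gamma(T\mathbb H^2)\subseteq\{w_1<0\}$, note that on $\mathbb H^2$ one has $x_3^2=1+x_1^2+x_2^2>x_2^2$ and $x_3>0$, whence $x_3>|x_2|$ and $w_1=x_2-x_3<0$. For the reverse inclusion I would construct an explicit preimage. Given $w\in Y$ with $w_1<0$, set $x_1=0$ and solve $x_2-x_3=w_1$, $x_2^2-x_3^2=-1$, $x_3>0$; this forces $x_2=(w_1^2-1)/(2w_1)$ and $x_3=-(w_1^2+1)/(2w_1)$. Next, the conditions $\langle x,y\rangle_L=0$, $y_2-y_3=w_2$, and $w_4=-x_1(y_3-y_2)+y_1(x_3-x_2)=-w_1y_1$ determine $y_1=-w_4/w_1$ and (provided $w_1\ne -1$, so $x_2\ne 0$) $y_2=-w_2x_3/w_1$, $y_3=-w_2x_2/w_1$. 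The remaining condition $y_1^2+y_2^2-y_3^2=w_3$ reduces, via $x_3^2-x_2^2=1$, to $w_4^2+w_2^2=w_1^2w_3$, which is exactly the defining equation of $Y$. The degenerate case $w_1=-1$ (giving $x_2=0$, $x_3=1$) is handled separately and more simply: then $\langle x,y\rangle_L=0$ forces $y_3=0$, and one takes $y_1=w_4$, $y_2=w_2$, with the identity $w_4^2+w_2^2=w_3$ again coming from \eqref{parvariety}.

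\textbf{Parts (d) and (e).} For part (d) I would apply Proposition \ref{poiredprop} to the generators $\zeta_1,\zeta_2,\zeta_3,\zeta_6$. Lemma \ref{parexperiment} records precisely that each $\{\zeta_i,\zeta_j\}^*$ on $V$ is a polynomial in these four invariants, so the Poisson structure descends to $Y$ with the stated table (after the identification $w_i\leftrightarrow\zeta_i$). For part (e), the computation is direct from (d) using the Leibniz rule: $\{w_1,h\}'=\tfrac12\{w_1,w_3\}'=w_2$, $\{w_2,h\}'=\tfrac12\{w_2,w_3\}'+U'(w_1)\{w_2,w_1\}'=w_1w_3-w_1^2U'(w_1)$, $\{w_3,h\}'=U'(w_1)\{w_3,w_1\}'=-2w_2U'(w_1)$, and $\{w_4,h\}'=0$ since $w_4$ is a Casimir. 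The first integrals $h$ and $w_4$ follow immediately. Finally, the descent of the original flow to this reduced flow is again a direct consequence of Proposition \ref{poiredprop}.

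\textbf{Main obstacle.} The main technical step is the surjectivity statement in part (c): producing an explicit preimage for every $w\in Y$ with $w_1<0$, and verifying that the defining equation of $Y$ is exactly what is needed to make the Lorentzian constraints on $y$ consistent. The parametrization breaks down at $w_1=-1$, and that boundary case requires a short separate argument. Everything else is either a template application of the Appendix framework or a mechanical computation.
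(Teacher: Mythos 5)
Your proposal is correct and follows essentially the same route as the paper: explicit quadratic parametrization of the $G_p$-orbits for (a), Lemma \ref{pargen} for (b), construction of a preimage with $x_1=0$ (forcing $x_2=(w_1^2-1)/(2w_1)$, $x_3=-(w_1^2+1)/(2w_1)$, hence $w_1<0$) for (c), and Lemma \ref{parexperiment} together with the Appendix framework for (d) and (e). Two cosmetic remarks: linear independence of the coefficient vectors in (a) already follows from $u_3-u_2\neq 0$ (i.e.\ from $\langle u,u\rangle_L=-1$) without using $\langle u,v\rangle_L=0$, and your case split at $w_1=-1$ is unnecessary since the linear system for $(y_2,y_3)$ has determinant $x_3-x_2=-w_1\neq 0$ for every $w_1<0$.
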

\begin{proof} Let $(u,v)^{\rm tr}\in\mathbb R^6$. Then the points in its orbit satisfy
\[
\left(\begin{array}{c}
x_1\\
x_2\\
x_3\\
y_1\\
y_2\\
y_3\end{array}\right)=\left(\begin{array}{c}
u_1\\
u_2\\
u_3\\
v_1\\
v_2\\
v_3\end{array}\right)+s\left(\begin{array}{c}
u_3-u_2\\
u_1\\
u_1\\
v_3-v_2\\
v_1\\
v_1\end{array}\right)+\frac{s^2}{2}\left(\begin{array}{c}
0\\
u_3-u_2\\
u_3-u_2\\
0\\
v_3-v_2\\
v_3-v_2\end{array}\right)
\]
On $T\mathbb H^2$ one has $u_1^2+u_2^2-u_3^2=-1$, and therefore $u_3-u_2\not=0$. This shows linear independence of the second and third vector on the right hand side, and part (a) follows.\\
Part (b) is a direct consequence of Lemma \ref{pargen}.\\
To prove part (c), let $w$ be in the image of $\Gamma$, thus there is a point $(x,y)^{\rm tr}\in T\mathbb H^2$ which satisfies
\[
\begin{array}{rcl}
x_2-x_3&=& w_1\\
y_2-y_3&=& w_2\\
y_1^2+2y_2(y_2-y_3)-(y_2-y_3)^2&=& w_3\\
x_1(y_2-y_3)-y_1(x_2-x_3)&=& w_4
\end{array}
\]
with $w_4^2=w_1^2w_3-w_2^3$, and by definition of the variety $V$ one has
\[
\begin{array}{rcl}
x_1^2+\left( 2x_2-(x_2-x_3)\right)(x_2-x_3)&=& -1\\
x_1y_1 +(x_2-x_3)y_2+\left(x_2-(x_2-x_3)\right)(y_2-y_3)&=& 0.
\end{array}
\]
The proof of part (a) shows that every orbit in $T\mathbb H^2$ contains a unique element with $x_1=0$, hence it suffices to seek an inverse image with $x_1=0$. Then the first defining equation for $V$ shows that $w_1\not=0$, and furthermore $x_2=(w_1^2-1)/(2w_1)$. Moreover the first invariant $\zeta_1$ yields
\[
x_3=-\frac{w_1^2+1}{2w_1}
\]
which implies $w_1<0$ in view of $x_3>0$. We proceed to show that every $w\in Y$ with $w_1<0$ admits an inverse image. The second defining identity for $V$ shows
\[
y_2=\frac{1+w_1^2}{2w_1^2}w_2,
\]
and the equality $\zeta_3=w_3$ implies
\[
y_1^2=\frac{1}{w_1^2}\left(w_1^2w_3-w_2^2\right).
\]
This equation for $y_1$ has a real solution in view of \eqref{parvariety}; and an inverse image of $w$ has been found.
\\
The assertion of part (d) follows from Lemma \ref{parexperiment}, and the remainder of the proof is analogous to the previous cases.
\end{proof}
Before proceeding we note and prove one more useful fact.
\begin{lemma}\label{separation} The invariants $\zeta_1$, $\zeta_2$, $\zeta_3$ and $\zeta_6$ separate the orbits of the group action on $T\mathbb H^2$.
\end{lemma}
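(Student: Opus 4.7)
The plan is to reuse the explicit computations already carried out in the proof of Proposition \ref{parabolicredprop}(c), and extract from them a uniqueness statement by passing to a transversal slice for the action.

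First I would show that every $G_p$-orbit in $T\mathbb H^2$ meets the slice $\{x_1 = 0\}$ in exactly one point. From the explicit form of the action given in part (a) of that proposition, the first coordinate evolves as
\[
x_1(s) = u_1 + s\,(u_3 - u_2),
\]
which is affine linear in $s$ with no quadratic term. On $T\mathbb H^2$ one has $u_3^2 - u_2^2 = 1 + u_1^2 > 0$ together with $u_3 > 0$, so the coefficient $u_3 - u_2$ is strictly positive. Hence $x_1(s) = 0$ has a unique solution $s \in \mathbb R$, and the slice $\{x_1=0\}$ is a global transversal for the $G_p$-action.

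Next, suppose two points $p, q \in T\mathbb H^2$ satisfy $\zeta_i(p) = \zeta_i(q)$ for $i \in \{1,2,3,6\}$. Replacing $p$ and $q$ by their unique slice representatives and setting $w_i := \zeta_i(p)$, the identities derived in the proof of Proposition \ref{parabolicredprop}(c) force
\[
x_2 = \frac{w_1^2 - 1}{2\,w_1}, \qquad x_3 = -\frac{w_1^2 + 1}{2\,w_1}, \qquad y_2 = \frac{1 + w_1^2}{2\,w_1^2}\,w_2, \qquad y_3 = y_2 - w_2,
\]
with $w_1 < 0$, so these five coordinates of the slice representative are determined by $(w_1, w_2, w_3)$ alone. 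The component $y_1$ is a priori determined only up to sign by $\zeta_3 = y_1^2 + y_2^2 - y_3^2$, but on the slice $x_1 = 0$ the remaining invariant reduces to
\[
\zeta_6 \,=\, x_1(y_2 - y_3) + y_1(x_3 - x_2) \,=\, -w_1\, y_1,
\]
and since $w_1 \neq 0$ this pins $y_1$ down uniquely as $-w_4/w_1$. Consequently the slice representatives of $p$ and $q$ coincide, and $p$, $q$ lie in the same $G_p$-orbit.

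The only delicate point is ensuring that the $\pm$ ambiguity in $y_1$ left by $\zeta_3$ is genuinely removed by $\zeta_6$; once one evaluates $\zeta_6$ on the slice $x_1 = 0$ this is immediate from the non-vanishing of $w_1$, so no further estimate or algebraic-geometric input (such as Luna's or Birkes' theorem invoked for the reductive case) is needed here — the argument is completely constructive, which is appropriate since the group $G_p$ is unipotent rather than reductive.
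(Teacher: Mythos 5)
Your proof is correct and follows essentially the same route as the paper's: both arguments reduce to the slice $\{x_1=0\}$, which each orbit meets exactly once, and then use the defining equations of $V$ together with $\zeta_1,\zeta_2$ to fix $x_2,x_3,y_2,y_3$ and the invariant $\zeta_6$ (rather than $\zeta_3$) to fix $y_1$. The only cosmetic difference is that you solve explicitly for the slice coordinates in terms of $(w_1,w_2,w_4)$, while the paper compares two slice representatives directly.
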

\begin{proof} We continue the argument from the proof of part (c) of the Proposition. Every orbit in $T\mathbb H^2$ contains exactly one element $(u,v)^{\rm tr}$ with $u_1=0$. Now let $(\hat u,\hat v)\in T\mathbb H^2$ with $\hat u_1=0$ have the same image $w$ under $\Gamma$. Then $u_2-u_3=\hat u_2-\hat u_3=w_1\not=0$, and the first equation defining $V$ shows that
\[
w_1(u_2+u_3)=(u_2-u_3)(u_2+u_3)=-1=(\hat u_2-\hat u_3)(\hat u_2+\hat u_3)=w_1(\hat u_2+\hat u_3)
\]
which implies $u_2=\hat u_2$ and $u_3=\hat u_3$. Thus $u=\hat u$.\\
Furthermore we have $v_2-v_3=w_2=\hat v_2-\hat v_3$, and the second defining equation for $V$ now shows
\[
(u_2-u_3)v_2+u_3w_2=0=(u_2-u_3)\hat v_2+u_3w_2,
\]
hence $v_2=\hat v_2$ and $v_3=\hat v_3$. Finally, the invariant $\zeta_6$ provides
$v_1w_1=\hat v_1w_1$, thus $v=\hat v$.
\end{proof}
As in the elliptic and hyperbolic setting we discuss the map
\[
\widetilde\Gamma=\left(\begin{array}{c}\zeta_1\\
                                              \zeta_2\\
                                            \zeta_3
                                           \end{array}\right),
\]
i.e., the composition of $\Gamma$ and the projection from $\mathbb R^4$ to $\mathbb R^3$. The proof of the next assertion follows from the fact that $\Gamma$ is constant on orbits, from Proposition \ref{parabolicredprop} and Lemma \ref{separation}.

\begin{lemma}\label{parabolicredmap}
{\em (a)} A point $w=(w_1,\,w_2,\,w_3)^{\rm tr}\in\mathbb R^3$ lies in $\widetilde\Gamma(T\mathbb H^2)$ if and only if 
\[
 w_1^2w_3-w_2^2\geq 0 \mbox{  and  } w_1<0.
\]
{\em(b)} The inverse image of $w$ is a union of two parabolas (orbits of the group action) whenever $w_1^2w_3-w_2^2>0$, and a single parabola if $w_1^2w_3-w_2^2=0$.
\end{lemma}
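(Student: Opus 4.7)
The plan is to deduce the lemma by composing the surjection $\Gamma\colon V\to Y$ of Proposition \ref{parabolicredprop}(c) with the projection $\pi\colon\mathbb R^4\to\mathbb R^3$ onto the first three coordinates, so that $\widetilde\Gamma=\pi\circ\Gamma$, and then using Lemma \ref{separation} to transport fiber cardinalities back to $T\mathbb H^2$.

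For part (a), I would start from the equality $\Gamma(T\mathbb H^2)=\{w\in Y:w_1<0\}$ established in Proposition \ref{parabolicredprop}(c), where $Y$ is cut out in $\mathbb R^4$ by $w_4^2=w_1^2w_3-w_2^2$. A triple $(w_1,w_2,w_3)^{\rm tr}\in\mathbb R^3$ lies in $\widetilde\Gamma(T\mathbb H^2)$ precisely when there is a real $w_4$ with $(w_1,w_2,w_3,w_4)^{\rm tr}\in\Gamma(T\mathbb H^2)$; existence of such a $w_4$ is equivalent to $w_1^2w_3-w_2^2\geq 0$, and the constraint $w_1<0$ is unchanged by the projection. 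This gives both inequalities in (a) and their sufficiency in one line each.

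For part (b), I note that for fixed $(w_1,w_2,w_3)$ with $w_1<0$ the equation $w_4^2=w_1^2w_3-w_2^2$ has two real solutions $w_4=\pm\sqrt{w_1^2w_3-w_2^2}$ when $w_1^2w_3-w_2^2>0$ and the single solution $w_4=0$ when $w_1^2w_3-w_2^2=0$. Thus the fiber $\pi^{-1}(w)\cap\Gamma(T\mathbb H^2)$ contains two points in the first case and one in the second. By Lemma \ref{separation}, each point of $\Gamma(T\mathbb H^2)$ is the image of a single $G$-orbit, and by Proposition \ref{parabolicredprop}(a) each such orbit is a parabola in a two-dimensional affine subspace of $\mathbb R^6$. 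Combining these observations yields exactly the two cases claimed in (b).

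The main (and only) subtlety is making sure the separation statement of Lemma \ref{separation} is invoked correctly: without it, a single point in $\Gamma(T\mathbb H^2)$ could a priori correspond to several orbits, which would inflate the count of parabolas in the fiber of $\widetilde\Gamma$. Once separation is in hand, the counting argument via $\pi$ is entirely routine and the proof reduces to the two elementary observations above.
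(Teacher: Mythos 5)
Your proposal is correct and follows exactly the route the paper intends: the paper's "proof" is the single remark that the lemma follows from the constancy of $\Gamma$ on orbits, Proposition \ref{parabolicredprop}, and Lemma \ref{separation}, and your write-up simply fills in those same steps (projecting $\Gamma(T\mathbb H^2)=\{w\in Y: w_1<0\}$ along $\pi$, counting the solutions of $w_4^2=w_1^2w_3-w_2^2$, and using separation to identify fibers with orbits, which are parabolas). No changes needed.
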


To the map $\widetilde\Gamma$ corresponds a  reduced system determined by the {reduced} Hamiltonian 
\[
 h=\frac 1 2 w_3+U(w_1),
\]
explicitly
\begin{equation}
 \begin{split}
  \dot w_1&=\{w_1, h\}^\prime=w_2\\
\dot w_2&=\{w_2, h\}^\prime=w_1w_3-w_1^2U'(w_1)\\
\dot w_3&=\{w_3, h\}^\prime=-2w_2U'(w_1)
 \end{split}
\label{parabolicredsys}
\end{equation}
with first integrals $h$ and $j^2=w_1^2w_3-w_2^2\geq 0$. Again we discuss the behavior of the reduced system, with rational potential $U$.

\begin{proposition}\label{parabolicredana}
{\em (a)} Relative equilibria: The stationary points of \eqref{parabolicredsys} on the image of  $\widetilde \Gamma$ are
\[
 z_\rho:=\left(\begin{array}{c}\rho\\
                               0\\
                                {\rho}\,U^\prime(\rho)\end{array}\right),\quad\rho<0\mbox{  and  }\rho\,U^\prime(\rho)\geq 0.
\]
The inverse image of $z_\rho$ is a parabola, which is a trajectory of \eqref{motion} whenever $U^\prime(\rho)\not=0$, and consists of stationary points only when $U^\prime(\rho)=0$.\\
{\em(b)} Every nonstationary solution of \eqref{parabolicredsys} is contained in the variety defined by
\[
\begin{array}{rclcl}
j^2&:=&w_1^2w_3-w_2^2&=& c_1\\
h&:=&\frac12 w_3+U(w_1)&=& c_2
\end{array}
\]
with real constants $c_1\geq 0$ and $c_2$. This variety is smooth at every nonstationary point of the reduced system. In particular, if the level set contains no stationary point then it is a one-dimensional submanifold and every connected component is a trajectory of \eqref{parabolicredsys}. If the level set contains a stationary point then the complement of the stationary point set is a union of nonstationary trajectories, with limit sets contained in the set of stationary points.\\
{\em (c)} The inverse image of a nonstationary solution of \eqref{parabolicredsys} on the level set $j^2=c_1$, $h=c_2$ is an unbounded invariant set of \eqref{motion} which contains no stationary points.
\end{proposition}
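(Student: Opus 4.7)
The plan is to mirror the proofs of Propositions \ref{ellipticredana} and \ref{hyperbolicredana}, using the explicit reduced system \eqref{parabolicredsys}, the image/inverse-image description in Lemma \ref{parabolicredmap}, and the two conserved quantities $h$ and $j^{2}=w_{1}^{2}w_{3}-w_{2}^{2}$ (the latter being the pull-back of $w_{4}^{2}$ under the projection from $Y$ to $\mathbb R^{3}$).

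For part (a), I would first set the right-hand side of \eqref{parabolicredsys} to zero. From $\dot w_{1}=w_{2}$ this gives $w_{2}=0$; then $\dot w_{3}=0$ is automatic, and $\dot w_{2}=w_{1}(w_{3}-w_{1}U'(w_{1}))=0$, combined with $w_{1}<0$ from Lemma \ref{parabolicredmap}(a), forces $w_{3}=\rho U'(\rho)$ where $\rho:=w_{1}<0$. The image condition then requires $j^{2}=\rho^{3}U'(\rho)\geq 0$, i.e.\ $\rho U'(\rho)\geq 0$ after dividing by $\rho^{2}$. For the inverse images I invoke Lemma \ref{parabolicredmap}(b): when $U'(\rho)\neq 0$ one has $j^{2}>0$ and the fibre is two parabolas, while for $U'(\rho)=0$ it is a single parabola. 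In the latter case the identity $\zeta_{3}=y_{1}^{2}+(y_{2}-y_{3})(y_{2}+y_{3})=w_{3}=0$ together with $\zeta_{2}=y_{2}-y_{3}=0$ forces $y_{1}=0$ and $y_{2}=y_{3}$, and then the constraint $\langle x,y\rangle_{L}=(x_{2}-x_{3})y_{2}=\rho y_{2}=0$ (with $\rho\neq 0$) forces $y=0$, so the parabola consists entirely of stationary points. When $U'(\rho)\neq 0$ each parabola is preserved by the flow (because the reduced field vanishes at $z_{\rho}$), and it can contain no stationary point of \eqref{motion} since such a point would require $\nabla_{L}^{x}U=U'(\rho)(0,1,1)$ to be proportional to $x\in\mathbb H^{2}$, impossible as $(0,1,1)$ is Lorentz-null; hence each parabola is a single trajectory.

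For part (b), I would verify that both $h$ and $j^{2}$ are integrals of \eqref{parabolicredsys}, the first by the Hamiltonian structure and the second because it equals $w_{4}^{2}$ on $Y$ and $\dot w_{4}=0$. Non-stationary solutions thus lie on the common level set $\{j^{2}=c_{1},\,h=c_{2}\}$ with $c_{1}\geq 0$. Smoothness of this variety at non-stationary points is reduced to computing the Jacobian
\[
\left(\begin{array}{ccc} 2w_{1}w_{3} & -2w_{2} & w_{1}^{2}\\ U'(w_{1}) & 0 & 1/2\end{array}\right);
\]
its minor on the last two columns is $-w_{2}$, so a rank drop forces $w_{2}=0$, and then the minor on the first and third columns becomes $w_{1}(w_{3}-w_{1}U'(w_{1}))$, which together with $w_{1}<0$ reproduces exactly the stationary-point condition from part (a). The structure of the connected components and the statement about limit sets then follows from the standard ODE argument used in Proposition \ref{ellipticredana}(b). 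For part (c), the inverse image of a non-stationary reduced trajectory is a union of orbit parabolas and is therefore automatically unbounded; it is invariant under \eqref{motion} because $\widetilde\Gamma$ intertwines the flows of $H$ and $h$, and it contains no stationary points of \eqref{motion} since these must project to stationary points of \eqref{parabolicredsys}.

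The main obstacle is the stationary-point analysis for $U'(\rho)=0$ in part (a): one has to combine the vanishing of $\zeta_{2}$ and $\zeta_{3}$ with the defining constraint $\langle x,y\rangle_{L}=0$ on $V$ (rather than just the image inequalities of Lemma \ref{parabolicredmap}(a)) in order to upgrade the reduced stationarity to full stationarity $y=0$. Beyond this point, every step is a routine transcription of the arguments already used in the elliptic and hyperbolic cases.
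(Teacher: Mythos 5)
Your proposal is correct and follows essentially the route the paper intends: the paper gives no explicit proof of Proposition \ref{parabolicredana}, implicitly deferring to the template of Proposition \ref{ellipticredana}, and your argument reproduces exactly that template (stationary points from \eqref{parabolicredsys} combined with the image inequalities of Lemma \ref{parabolicredmap}, the Jacobian of $(j^2,h)$ whose rank drop forces $w_2=0$ and then the stationarity condition, and the fibre/flow-intertwining arguments for the reconstruction), with the $U'(\rho)=0$ analysis on $V$ being a welcome explicit supplement. One remark: your count via Lemma \ref{parabolicredmap}(b) correctly yields a union of \emph{two} parabolas over $z_\rho$ when $U'(\rho)\neq 0$ (since there $j^2=\rho^3U'(\rho)>0$), each of which is a single trajectory by your connectedness argument; this is at odds with the proposition's literal wording ``a parabola'' but consistent with the elliptic analogue in Proposition \ref{ellipticredana}(a), so the imprecision lies in the statement rather than in your proof.
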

\begin{remark} One can determine the level sets of $(j^2,h)$ in a direct manner from
\[
2\left(c_2-U(w_1)\right)=w_3=\frac{w_2^2+c_1}{w_1^2}
\]
and thus
\[
w_2^2=2\left(c_2-U(w_1)\right)w_1^2-c_1.
\]

\end{remark}
\begin{example} For linear potential, thus $U(x_2-x_3)= c\cdot (x_2-x_3)$ with $c\not=0$, system  \eqref{parabolicredsys} admits equilibria
\[
z_\rho=\left(\begin{array}{c} \rho\\
                                              0\\
                                          c\rho\end{array}\right), \quad\rho<0,
\]
whenever $c<0$ (see figure \ref{fig:parabolic}). The corresponding solutions of \eqref{motion} can be interpreted as motion on a parabola.  For $c>0$ there exist no stationary points in the image of $\widetilde \Gamma$.\\
We will discuss the trajectories of  \eqref{parabolicredsys} in some detail, starting from the conditions
\[
\begin{array}{c}
w_2^2=2w_1^2\left(c_2-cw_1\right)-c_1,\quad w_1<0\\
     w_3=2\left(c_2-cw_1\right).
\end{array}
\]
It is sufficient to consider the projection of the trajectory onto the plane $w_3=0$, which is obviously diffeomorphic to the trajectory. Thus, only the first set of conditions needs to be taken into account. One sees immediately that these projections are subsets of elliptic curves or degenerate cubic curves. An elementary verification shows that the list is as follows:\\
{\em Case 1: } $c>0$. Then the set of points satisfying the condition is unbounded, in view of $-2c_1^3\to\infty$ as $w_1\to -\infty$. Moreover, no stationary points exist.\\
$\bullet$ If $c_1>0$ then the set is an unbounded connected component of an elliptic curve, and equal to a trajectory.\\
$\bullet$ If $c_1=0$ and $c_2<0$ then the set is an unbounded connected component of an elliptic curve, and equal to a trajectory.\\
$\bullet$ If $c_1=0$ and $c_2>0$ then the curve is degenerate, and the set consists of two branches approaching  $0$ with slopes $\pm \sqrt{2c_2}$; each branch is equal to a trajectory.\\
$\bullet$ If $c_1=0$ and $c_2=0$ then the curve is degenerate, and the set consists of two branches approaching  $0$ with slope $0$; each branch is equal to a trajectory.\\

\begin{figure}
        \centering
        \begin{subfigure}[b]{0.5\textwidth}
                \centering
                \includegraphics[width=\textwidth]{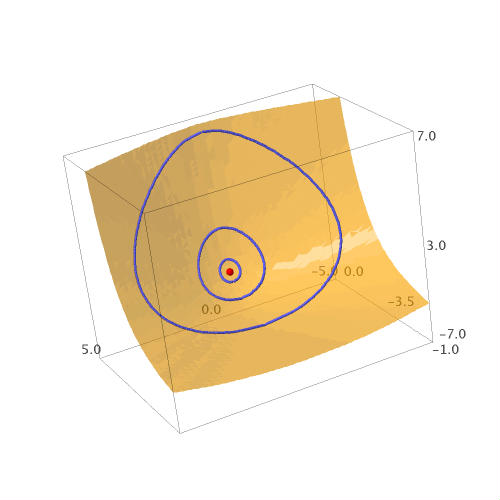}
                \caption{ $ c = -1 $ and $ c _1 = 3 $ }
        \end{subfigure}%
        ~ 
        \begin{subfigure}[b]{0.5\textwidth}
                \centering
                \includegraphics[width=\textwidth]{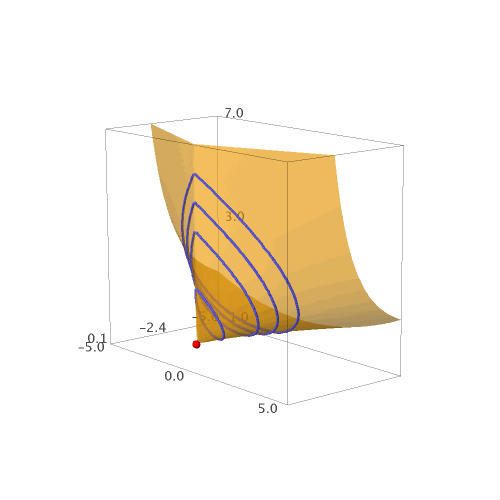}
               \caption{ $ c = -1 $ and $ c _1 = 0 $ }
        \end{subfigure} 
        \caption{Trajectories (in blue) and stationary points (in red) of the reduced system in the parabolic case  with a linear potential.}\label{fig:parabolic}
\end{figure}

{\em Case 2: } $c<0$. Then the set of points satisfying the condition is bounded, in view of $-2c_1^3\to-\infty$ as $w_1\to -\infty$.\\
$\bullet$ If $c_2\leq 0$ then the set is empty.\\
$\bullet$ If $c_2>0$ and $c_1=0$ then the set is a loop of a degenerate curve, which is a trajectory and approaches $0$ with slopes $\pm \sqrt{2c_2}$.\\
$\bullet$ If $c_2>0$ and $c_1>0$ then the set is either a bounded component of an elliptic curve, or a single point (in which case the curve is degenerate and the point is stationary), or empty, depending on the magnitude of $c_1$.
\end{example}

\appendix
 \section{ Poisson varieties and reduction} 
In this section we discuss the notion of a Poisson variety, which is a natural generalization of Poisson manifold. Initially we will restrict the class of admissible functions to polynomials (or rationals). Within this framework we obtain a singular reduction procedure for actions of linear algebraic groups. Such actions occur naturally in a number of relevant applications, including those discussed in the main part of the present paper. Our approach may be seen as complementary to the singular reduction procedure developed in Cushman and Bates \cite{Cushman}, Appendix B. A priori the settings, and to some extent the objectives, of both procedures are different. Cushman and Bates consider a Lie group that acts smoothly on a symplectic manifold and respects the symplectic structure. Their construction starts from topological properties of the quotient of this action (the orbit space) and in order to ensure good topological behavior, properness of the group action is required in \cite{Cushman}. On the other hand, we start with different assumptions on the group and the group action and use some facts and tools from Algebraic Geometry. From a technical perspective the approach via Poisson varieties is quite uncomplicated, and the same holds for computational issues, provided that the invariant algebra admits a set of generators that is not only finite but also small enough to keep computations feasible. The ``quotient" obtained in our procedure is a subset of an algebraic variety. There is generally no $1$-$1$-correspondence between this subset and the orbits of the group action, but from an algebraic point of view the variety may still be seen as a good substitute for a quotient, and one thus obtains a natural ``reduced Poisson bracket". 
(A similar approach was used ad hoc in \cite{WalcherNF} to discuss certain Hamiltonian Poincar\'e-Dulac normal forms.) For linear compact groups acting naturally on $n$-space (or on a non-singular variety), both notions - by virtue of a theorem by Schwarz \cite{Schwarz} - provide the same reduction for smooth functions.

\medskip
\noindent
We will present  a rather detailed outline of differential equations on varieties, as well as their reduction with respect to an algebraic group action, including some proofs, for the convenience of the reader.  To the best of our knowledge, Proposition \ref{poiredprop} has not appeared in the literature so far.

We first recall a few facts about algebraic varieties and introduce some notation. The necessary background material can be found in concise form e.g. in the first chapter of the monograph \cite{Kunz} by Kunz, or in the introductory Chapter I of Humphreys' book on algebraic groups \cite{Humphreys}.

We let $\mathbb K$ stand for $\mathbb R$ or $\mathbb C$. 
Consider the affine space $\mathbb K^n$ and the algebra  $\mathbb K[x_1,\ldots, x_n]$ of polynomial functions on $\mathbb K^n$. By Hilbert's Basissatz any ideal in $\mathbb K[x_1,\ldots, x_n]$ admits a finite set of generators.
We call $V\subset\mathbb K^n$ Zariski-closed (or an affine algebraic variety, briefly a variety) if it is the common zero set of some family of polynomials. The set of all polynomials vanishing on $V$ is an ideal of $\mathbb K[x_1,\ldots, x_n]$, which is called the vanishing ideal $J(V)$.  The Zariski-closed sets of $\mathbb K^n$ define the Zariski topology on $\mathbb K^n$ and, by restriction, on its subsets. The vanishing ideal of a variety is always a radical ideal (i.e., whenever $\psi^m\in J(V)$ for some $m>0$ then $\psi\in J(V)$);  there is a $1$-$1$-correspondence between varieties and radical ideals. A variety $V$ is called irreducible if it is not the union of two proper closed subsets; this holds if and only if its vanishing ideal is prime.
A regular function on the variety $V$ is - by definition - the restriction of a polynomial in $\mathbb K[x_1,\ldots, x_n]$ to $V$; the algebra  $\mathbb{K}[V]$ of regular functions is therefore isomorphic to the quotient algebra $\mathbb K\left[x_1,\ldots,x_n\right]/J(V)$. (For the sake of clarity we will sometimes denote a regular function as a class $\phi+J(V)$.)

\medskip
\noindent
We now turn to differential equations and vector fields on varieties. Given a vector field $f$ with components $f_1,\ldots, f_n\in \mathbb K[x_1,\ldots, x_n]$, consider the 
polynomial ordinary differential equation
\[
\dot x=f(x)=\left(\begin{array}{c}f_1(x)\\
                                                 \vdots\\
                                                  f_n(x)\end{array}\right)
                                                   \mbox{  on  }\mathbb K^n.
\]
The local flow of this equation, i.e. the solution with initial value $y$ at $t=0$, will be denoted $\Phi_f(t,\,y)$.
To this differential equation corresponds a derivation of $\mathbb K[x_1,\ldots, x_n]$, given by the Lie derivative $L_f$ with
 $L_f(\phi)(x)=D\phi(x)\,f(x)$ for any function $\phi$. Moreover, every derivation of  $\mathbb K[x_1,\ldots, x_n]$ is of this type.

We recall a standard result on invariant algebraic varieties.
\begin{lemma}\label{appleminv}
The local flow of the polynomial differential equation $\dot x=f(x)$ on $\mathbb K^n$ leaves $V$ invariant if and only if $L_f$ sends $J(V)$ to $J(V)$.
\end{lemma}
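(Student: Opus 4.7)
The plan is to treat the two implications separately, with the forward direction being essentially a differentiation of the defining equations of $V$ along orbits, and the reverse direction relying on analyticity of the flow together with an iterated application of the hypothesis.

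For the ``only if'' direction, suppose the local flow of $\dot x = f(x)$ leaves $V$ invariant, and fix $\phi \in J(V)$. To show $L_f(\phi) \in J(V)$, pick an arbitrary $y \in V$ and consider the scalar function $g(t) := \phi(\Phi_f(t,y))$. By invariance, $\Phi_f(t,y) \in V$ for all $t$ in the domain of definition, so $g \equiv 0$. Differentiating at $t=0$ via the chain rule yields $0 = g'(0) = D\phi(y)\,f(y) = L_f(\phi)(y)$. Since $y \in V$ was arbitrary, $L_f(\phi)$ vanishes on $V$, hence lies in $J(V)$.

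For the ``if'' direction, assume $L_f(J(V)) \subseteq J(V)$; then by induction $L_f^k(J(V)) \subseteq J(V)$ for every $k \geq 0$. Fix $y \in V$ and $\phi \in J(V)$, and set $g(t) := \phi(\Phi_f(t,y))$. Because $f$ is polynomial, the flow $t \mapsto \Phi_f(t,y)$ is real-analytic on its interval of existence, and composition with the polynomial $\phi$ makes $g$ real-analytic as well. A straightforward induction on $k$ shows $g^{(k)}(t) = L_f^k(\phi)(\Phi_f(t,y))$, whence $g^{(k)}(0) = L_f^k(\phi)(y) = 0$, because $L_f^k(\phi) \in J(V)$ and $y \in V$. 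Since all Taylor coefficients of the analytic function $g$ at $t=0$ vanish, we conclude $g \equiv 0$ on the interval of existence. As $\phi \in J(V)$ was arbitrary and $V$ is cut out by its vanishing ideal, this forces $\Phi_f(t,y) \in V$, establishing invariance.

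The only delicate point is the analyticity argument in the second direction: one must justify that pointwise vanishing of every derivative of $g$ at $t=0$ forces $g$ to be identically zero on the whole interval of existence of the flow, not just locally. This is immediate from real-analyticity of the flow of a polynomial vector field, which in turn follows from the standard Cauchy majorant argument for ODEs with analytic right-hand side; alternatively, one can work with the formal Taylor series and invoke the identity theorem on the (connected) maximal interval of existence. All other steps are routine applications of the chain rule and the definition of $L_f$.
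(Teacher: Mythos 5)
Your proof is correct and follows essentially the same route as the paper: the ``only if'' direction is the same differentiation at $t=0$, and your ``if'' direction amounts to the Lie series identity $\phi(\Phi_f(t,y))=\sum_k \frac{t^k}{k!}L_f^k(\phi)(y)$, which the paper simply cites from Gr\"obner--Knapp rather than re-deriving via analyticity of the flow. The extra care you take about extending vanishing to the full interval of existence is harmless but not needed, since only invariance under the \emph{local} flow is claimed.
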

\begin{proof}
Let $\psi$ be any polynomial. Then one has the Lie series identity
\[
\psi\left(\Phi_f(t,y)\right)=\sum_{k=0}^\infty\frac{t^k}{k!}L_f^k(\psi)(y)
\]
(see e.g. Gr\"obner/Knapp \cite{Groebner}). Assuming that $L_f$ sends $J(V)$ to itself, one obtains for $\psi\in J(V)$ that all $L_f^k(\psi)\in J(V)$, and thus all $L_f^k(\psi)(y)=0$ whenever $y\in V$. Conversely, assume that $V$ is an invariant set for the differential equation and let $y\in V$, $\psi\in J(V)$. Then $\psi\left(\Phi_f(t,y)\right)=0$ for all $t$ near $0$, which in particular implies $L_f(\psi)(y)=0$.
\end{proof}
 If the condition Lemma \ref{appleminv} holds then $L_f$ induces a derivation of $\mathbb K[V]/J(V)$. Thus one has a correspondence between these derivations and polynomial differential equations with invariant set $V$.

\medskip
\noindent
\begin{definition}
A {\em Poisson bracket} on the variety $V$ is a skew-symmetric bilinear map 
\[
\left\{\cdot,\,\cdot\right\}:\,\, \mathbb{K}[V]\times \mathbb{K}[V]\to \mathbb{K}[V]
\]
which satisfies the Leibniz rule and the Jacobi identity.
\end{definition}
A Poisson bracket on a variety may be characterized by polynomial ``structure functions" as follows:
For $1\leq i,\,j\leq n$ let $\gamma_{ij}\in \mathbb K\left[x_1,\ldots,x_n\right]$ such that 
\[
\left\{x_i+J(V),\,x_j+J(V)\right\}=\gamma_{ij}+J(V)
\]
for all $i$ and $j$. Then, by bilinearity and the Leibniz rule, for all polynomials $\phi$ and $\psi$ one has
\begin{equation}\label{poisdef}
\left\{\phi+J(V),\,\psi+J(V)\right\}=\sum_{i,j}\gamma_{ij}\frac{\partial \phi}{\partial x_i}\cdot\frac{\partial \psi}{\partial x_j}+J(V)
\end{equation}
By construction the polynomial map $\left(\sum_{i}\gamma_{ij}\frac{\partial \phi}{\partial x_i}\right)_{1\leq j\leq n}$ gives rise to a vector field which sends $J(V)$ to itself; this will be called the Hamiltonian vector field of $\phi+J(V)$.

\medskip
\noindent
Next we turn to symmetry groups. Let $G\subseteq GL(n,\,\mathbb K)$ be a linear algebraic group acting naturally on $\mathbb K^n$, and let $V$ be an irreducible variety such that $T(V)=V$ for all $T\in G$ (thus $G$ is a subgroup of the automorphism group of $V$). We will assume, in addition, that the invariant algebra $\mathbb K[V]^G$ is finitely generated, and that $\gamma_1,\ldots,\gamma_r$ is a set of generators. Let 
\[
I:=\left\{\psi\in\mathbb K[x_1,\ldots,x_r]:\, \psi(\gamma_1,\ldots,\gamma_r)=0\right\}
\]
denote the ideal of relations between the $\gamma_i$. This is obviously a radical ideal. We define the Hilbert map
\[
\Gamma=\left(\begin{array}{c}\gamma_1\\
                                              \vdots\\
                                            \gamma_r\end{array}\right): V\to \mathbb K^r.
\]
\begin{lemma}\label{applemhilb} The Zariski closure of $\Gamma(V)$ is equal to the vanishing set $Y$ of $I$, and $I=J(Y)$.
\end{lemma}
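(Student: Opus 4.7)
The plan is to derive both assertions simultaneously from a single observation: $I$ equals the vanishing ideal $J(\Gamma(V))$ of $\Gamma(V)$ as a subset of $\mathbb{K}^r$. Once this is in hand, both statements follow from the standard correspondence between subsets of affine space and their vanishing ideals.

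First I would verify the easy inclusion $\Gamma(V)\subseteq Y$: if $\psi\in I$ and $v\in V$, then by definition of $I$ the polynomial $\psi(\gamma_1,\ldots,\gamma_r)$ lies in $J(V)$, so it vanishes pointwise on $V$; in particular $\psi(\Gamma(v))=\psi(\gamma_1(v),\ldots,\gamma_r(v))=0$, whence $\Gamma(v)\in Y$. Passing to the Zariski closure gives $\overline{\Gamma(V)}\subseteq Y$.

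The key step is the identification $I = J(\Gamma(V))$. A polynomial $\psi\in\mathbb{K}[x_1,\ldots,x_r]$ lies in $J(\Gamma(V))$ iff $\psi(\gamma_1(v),\ldots,\gamma_r(v))=0$ for every $v\in V$, which by the very definition of $J(V)$ (and hence of $\mathbb{K}[V]=\mathbb{K}[x_1,\ldots,x_n]/J(V)$) is equivalent to $\psi(\gamma_1,\ldots,\gamma_r)+J(V)=0$ in $\mathbb{K}[V]$, i.e.\ to $\psi\in I$. Combining this with the general fact that the Zariski closure of any set $S\subseteq\mathbb{K}^r$ is the common zero set of $J(S)$, one obtains $\overline{\Gamma(V)}=Y$, which is the first assertion. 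For the equality $I=J(Y)$, I would invoke the elementary identity $J(S)=J(\overline{S})$ (any polynomial vanishing on $S$ cuts out a closed set containing $\overline{S}$, while the reverse inclusion is trivial): applied with $S=\Gamma(V)$, this yields $J(Y)=J(\overline{\Gamma(V)})=J(\Gamma(V))=I$.

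I expect no serious obstacle here: the proof is entirely formal and consists in unwinding definitions. The mildly delicate point to keep in mind is to avoid conflating the \emph{symbolic} condition $\psi(\gamma_1,\ldots,\gamma_r)\in J(V)$ with pointwise vanishing on $V$, but these coincide by the very construction of $\mathbb{K}[V]$. Notably, no appeal to the Nullstellensatz is required: $I$ is tautologically the vanishing ideal of the set $\Gamma(V)$, which sidesteps the $\mathbb{R}$ versus $\mathbb{C}$ issue that would otherwise require care when relating radical ideals to their zero loci.
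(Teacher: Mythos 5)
Your proof is correct and follows essentially the same route as the paper: both arguments hinge on the tautological identification of $I$ with the vanishing ideal of $\Gamma(V)$, combined with the standard fact that the Zariski closure of a set is the zero locus of its vanishing ideal. The paper compresses this into a two-line display; you merely spell out the same steps (including the identity $J(S)=J(\overline{S})$) in more detail.
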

\begin{proof}
One has
\[
\begin{array}{rcl}
\overline{\Gamma(V)}&=& \left\{ z\in \mathbb K^r:\, \rho(z)=0\mbox{  for all  }\rho\mbox{  with  }\rho\circ\Gamma=0\right\}\\
   &=&  \left\{ z\in\mathbb K^r:\, \rho(z)=0\mbox{  for all  }\rho\in I \right\}.
\end{array}
\]
This implies both assertions.
\end{proof}

Now assume furthermore that the polynomial vector field $f$ on $V$ is $G$-symmetric, thus
$Tf(y)=f(Ty)$ for all $T\in G$ and $y\in V$. (Note that $f$ is determined only up to elements of $J(V)^n$, and that the symmetry condition must be satisfied modulo $J(V)^n$ only.) Then the Hilbert map induces a reduced vector field on $Y\subseteq\mathbb K^r$.
\begin{proposition}\label{redprop} There is a polynomial vector field $g$ on $\mathbb K^r$ such that the identity
\[
D\Gamma(x)f(x)=g(\Gamma(x))
\]
holds on $V$. Thus $\Gamma$ maps parameterized solutions of $\dot x=f(x)$ on $V$ to parameterized solutions of $\dot y=g(y)$. Moreover, the variety $Y$ is invariant for $\dot y=g(y)$.
\end{proposition}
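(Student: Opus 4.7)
The plan is to exploit the $G$-equivariance of $f$ on $V$ to show that the Lie derivative $L_f$ preserves the invariant algebra $\mathbb K[V]^G$, and then to use the hypothesis that $\gamma_1,\ldots,\gamma_r$ generate this algebra to express each $L_f(\gamma_i)$ as a polynomial in the $\gamma_j$. The components of such a polynomial relation will furnish the desired reduced vector field $g$.

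First I would verify that $L_f$ maps $\mathbb K[V]^G$ into itself. For any generator $\gamma_i$, any $T\in G$, and any $x\in V$, the chain rule yields
\[
L_f(\gamma_i)(Tx) = D\gamma_i(Tx)\, f(Tx) = D\gamma_i(Tx)\, T f(x) = D(\gamma_i\circ T)(x)\, f(x) = L_f(\gamma_i)(x),
\]
using $G$-equivariance of $f$ on $V$ and $G$-invariance of $\gamma_i$. Hence $L_f(\gamma_i)\in\mathbb K[V]^G$, and because $\gamma_1,\ldots,\gamma_r$ generate this algebra there exist polynomials $g_i\in\mathbb K[y_1,\ldots,y_r]$ with $L_f(\gamma_i) = g_i(\gamma_1,\ldots,\gamma_r)$ in $\mathbb K[V]$. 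Stacking these identities coordinate-wise and setting $g:=(g_1,\ldots,g_r)^{\mathrm{tr}}$, one obtains the required identity $D\Gamma(x)\,f(x) = g(\Gamma(x))$ on $V$. The statement about parameterized solutions then follows by differentiating $t\mapsto\Gamma(\Phi_f(t,x_0))$ along a trajectory in $V$ and invoking the uniqueness of solutions of $\dot y = g(y)$.

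For the invariance of $Y$ under $\dot y=g(y)$ I would invoke Lemma \ref{appleminv}: it suffices to show that $L_g$ sends $I=J(Y)$ into itself. Given $\psi\in I$, the definition of $I$ gives $\psi\circ\Gamma\in J(V)$, and since $f$ restricts to a vector field on $V$ one has $L_f(\psi\circ\Gamma)\in J(V)$ by Lemma \ref{appleminv} (applied to $f$ and $V$). Combining the chain rule with the identity just established,
\[
L_f(\psi\circ\Gamma)(x) = D\psi(\Gamma(x))\cdot D\Gamma(x)\,f(x) = D\psi(\Gamma(x))\cdot g(\Gamma(x)) = L_g(\psi)(\Gamma(x)),
\]
so $L_g(\psi)$ vanishes on $\Gamma(V)$, and hence on its Zariski closure $Y$ by Lemma \ref{applemhilb}. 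Thus $L_g(\psi)\in J(Y)=I$, as required.

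The hypotheses do most of the work: finite generation of $\mathbb K[V]^G$ by $\gamma_1,\ldots,\gamma_r$ is assumed outright, so the main conceptual point is simply that $L_f$ respects the $G$-action, which is the chain rule combined with equivariance of $f$. I do not anticipate serious obstacles here; the argument is structural rather than computational, and the finiteness hypothesis conveniently sidesteps any subtlety in lifting invariants between $\mathbb K[V]$ and $\mathbb K[x_1,\ldots,x_n]$.
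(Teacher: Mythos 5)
Your argument is correct and follows essentially the same route as the paper: $G$-symmetry places each $L_f(\gamma_i)$ in $\mathbb K[V]^G$, finite generation yields the polynomials $g_i$ and hence $g$, and invariance of $Y$ follows from Zariski density of $\Gamma(V)$ in $Y$ together with Lemma \ref{appleminv}. The only (harmless) local difference is in the last step, where the paper deduces $L_g(\rho)|_{\Gamma(V)}=0$ for $\rho\in I$ from the flow conjugacy $\Gamma\circ\Phi_f(t,\cdot)=\Phi_g(t,\Gamma(\cdot))$ and the Lie series, whereas you obtain the same vanishing directly from the pointwise chain-rule identity $L_f(\rho\circ\Gamma)=L_g(\rho)\circ\Gamma$ on $V$.
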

\begin{proof} Since $f$ is $G$-symmetric, one has $L_f(\psi)\in \mathbb K[V]$ for every $\psi\in\mathbb K[V]$. In particular there exist polynomials $g_i$ in $r$ variables such that 
\[
L_f(\gamma_i)=g_i(\gamma_1,\ldots,\gamma_r),\quad 1\leq i\leq r,
\]
hence there exists $g$ such that the asserted identity holds. This, in turn, implies the identity
\[
\Gamma(\Phi_f(t,z))=\Phi_g(t,\Gamma(z)),\quad z\in V,
\]
for sufficiently small $t$. In particular $\Phi_g(t,\Gamma(z))\in \Gamma(V)$ for all $t$ near $0$. Let $\rho$ be an element of $I$, the ideal of relations between the $\gamma_i$. By the Lie series identity one obtains $L_g(\rho)(w)=0$ for all $w\in \Gamma(V)$. But then
$L_g(\rho)(w)=0$ for all $w\in \overline{\Gamma(V)}=Y$, and the second assertion follows with Lemma \ref{appleminv}.
\end{proof}

After these preliminaries, we turn to symmetry reduction of Poisson varieties. Thus, let $V$ and $G$ be as above, and moreover assume that there is a Poisson bracket on $V$ which is  $G$-invariant, i.e., the identity
\[
\left\{\phi\circ T,\,\psi\circ T\right\}=\left\{\phi,\,\psi\right\}\circ T
\]
holds in $\mathbb K[V]$ for all $T\in G$. This condition directly implies that the Poisson bracket of two $G$-invariant regular functions on $V$ is $G$-invariant.

\begin{proposition}\label{poiredprop}
{\em (a)} Let $\phi,\,\psi \in \mathbb K[V]^G$, and let $\widetilde\phi$, $\widetilde\psi\in\mathbb K[x_1,\ldots,x_r]/I$ such that
\[
\phi=\widetilde\phi\circ\Gamma,\quad \psi=\widetilde\psi\circ\Gamma.
\]
Then there exists a uniquely determined $\{\widetilde \phi,\,\widetilde\psi\}^\prime\in\mathbb K[x_1,\ldots,x_r]/I=\mathbb K[Y]$ such that
\begin{equation}\label{poisred}
\{\phi,\,\psi  \}  =\{\widetilde \phi,\,\widetilde\psi\}^\prime\circ\Gamma.
\end{equation}

\noindent
{\em (b)} Identity \eqref{poisred} defines a Poisson bracket $\{\cdot,\,\cdot\}^\prime$ on $Y$. The Hilbert map $\Gamma$ sends Hamiltonian $G$-equivariant vector fields on $V$ to vector fields on $Y$ that are Hamiltonian with respect to  $\{\cdot,\,\cdot\}^\prime$ .
Moreover, Casimir elements of $(V,\,\left\{\cdot,\,\cdot\right\})$ are sent to Casimir elements of $(Y,\,\left\{\cdot,\,\cdot\right\}^\prime)$.
\end{proposition}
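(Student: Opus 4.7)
The plan is to derive both parts from a single clean identity, exploiting that the Poisson bracket of two $G$-invariant functions is again $G$-invariant.

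First, for part (a), I would start by observing that the $G$-invariance of the bracket, together with $\phi,\psi\in\mathbb{K}[V]^G$, forces $\{\phi,\psi\}\in\mathbb{K}[V]^G$. Since $\gamma_1,\ldots,\gamma_r$ generate $\mathbb{K}[V]^G$ by hypothesis, there exists some polynomial $F\in\mathbb{K}[x_1,\ldots,x_r]$ with $\{\phi,\psi\}=F\circ\Gamma$. I would then set $\{\widetilde\phi,\widetilde\psi\}':=F+I\in\mathbb{K}[x_1,\ldots,x_r]/I=\mathbb{K}[Y]$. Well-definedness in both directions needs to be checked: if $F'$ is another representative, then $(F-F')\circ\Gamma=0$ on $V$, so $F-F'$ vanishes on $\Gamma(V)$ and hence on its Zariski closure $Y$ (by Lemma \ref{applemhilb}), so $F-F'\in J(Y)=I$; conversely, if $\widetilde\phi$ is replaced by another lift $\widetilde\phi+\iota$ with $\iota\in I$, then $\iota\circ\Gamma=0$ on $V$, so $\phi$ is unchanged, and so is the bracket.

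For part (b), the Poisson axioms for $\{\cdot,\cdot\}'$ on $\mathbb{K}[Y]$ transfer directly from those on $\mathbb{K}[V]$ via the defining identity $\{\phi,\psi\}=\{\widetilde\phi,\widetilde\psi\}'\circ\Gamma$: skew-symmetry, bilinearity, the Leibniz rule and Jacobi each translate term by term, using that $\Gamma^*:\mathbb{K}[Y]\to\mathbb{K}[V]^G$ is an injective ring homomorphism (injectivity again from Zariski density of $\Gamma(V)$ in $Y$). For the Hamiltonian vector field statement, given $\phi\in\mathbb{K}[V]^G$ the Hamiltonian vector field $X_\phi$ on $V$ is automatically $G$-equivariant (since the bracket is $G$-invariant), so Proposition \ref{redprop} produces a reduced polynomial vector field $g$ on $Y$ with $D\Gamma\cdot X_\phi=g\circ\Gamma$. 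To see that $g$ equals the Hamiltonian vector field of $\widetilde\phi$ relative to $\{\cdot,\cdot\}'$, I would test against arbitrary $\widetilde\psi\in\mathbb{K}[Y]$: the chain rule gives
\[
L_g(\widetilde\psi)\circ\Gamma=L_{X_\phi}(\widetilde\psi\circ\Gamma)=\{\widetilde\psi\circ\Gamma,\phi\}=\{\widetilde\psi,\widetilde\phi\}'\circ\Gamma,
\]
and Zariski density of $\Gamma(V)$ in $Y$ promotes this to $L_g(\widetilde\psi)=\{\widetilde\psi,\widetilde\phi\}'$ on all of $Y$. The Casimir assertion follows by the same dense-image argument applied to a $G$-invariant Casimir $c$: for every $\widetilde\psi\in\mathbb{K}[Y]$ one has $\{\widetilde c,\widetilde\psi\}'\circ\Gamma=\{c,\widetilde\psi\circ\Gamma\}=0$, hence $\{\widetilde c,\widetilde\psi\}'=0$ on $Y$.

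The only mildly delicate point — and the step I would treat most carefully — is the systematic use of Zariski density of $\Gamma(V)$ in $Y$ to transfer identities that are verified only on $\Gamma(V)$ to identities in $\mathbb{K}[Y]$; this is what underwrites both the well-definedness of the bracket and the identification of the reduced vector field with the Hamiltonian vector field of $\widetilde\phi$. There is no genuine obstacle beyond keeping track of representatives modulo $I$ and invoking Lemma \ref{applemhilb} and Proposition \ref{redprop} at the right moments.
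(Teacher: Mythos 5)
Your proposal is correct and follows essentially the same route as the paper's proof: $G$-invariance of $\{\phi,\psi\}$ plus finite generation by the $\gamma_i$ gives existence, and Zariski density of $\Gamma(V)$ in $Y$ (Lemma \ref{applemhilb}) gives uniqueness and transfers the Poisson axioms. You actually supply more detail than the paper on the identification of the reduced vector field with the Hamiltonian vector field of $\widetilde\phi$ and on the Casimir assertion, which the paper dispatches with ``proven in the same fashion''; these additions are sound (up to the usual sign convention for $L_{X_\phi}$).
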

\begin{proof}
Part (a) holds because $\{\phi,\,\psi  \}$ is $G$-invariant, and the image of $\Gamma$ is Zariski-dense in $Y$.
Part (b) follows in a straightforward manner from the density of $\Gamma(V)$ in $Y$, and from the Poisson bracket properties of $\{\cdot,\,\cdot\}$, using identity \eqref{poisred}.
As a representative we prove the Leibniz rule. Given 
\[
\widetilde\phi,\,\widetilde\psi,\,\widetilde\eta\in\mathbb K[Y]=\mathbb K[x_1,\ldots,x_r]/I,
\]
one has
\[
\begin{array}{lcl}
\{\widetilde\phi,\,\widetilde\psi\cdot\widetilde\eta\}^\prime\circ\Gamma&=&\{\widetilde\phi\circ\Gamma,\,\widetilde{(\psi\cdot\eta)}\circ\Gamma\}=\{\widetilde\phi\circ\Gamma,\,(\widetilde\psi\circ\Gamma)\cdot(\widetilde\eta\circ\Gamma)\}\\
&=&\{\widetilde\phi\circ\Gamma,\,\widetilde\psi\circ\Gamma\}\cdot(\widetilde\eta\circ\Gamma)+\{\widetilde\phi\circ\Gamma,\,\widetilde\eta\circ\Gamma\}\cdot(\widetilde\psi\circ\Gamma)\}\\
&=&\left (\{\widetilde\phi,\,\widetilde\psi\}^\prime\cdot\widetilde\eta+\{\widetilde\phi,\,\widetilde\eta\}^\prime\cdot\widetilde\psi\right)\circ\Gamma.
\end{array}
\]
by the Leibniz rule in $\mathbb K[V]$ and various definitions.
This implies
 \[
 \{\widetilde\phi,\,\widetilde\psi\cdot\widetilde\eta\}^\prime\\
=
\{\widetilde\phi,\,\widetilde\psi\}^\prime\cdot\widetilde\eta+\{\widetilde\phi,\,\widetilde\eta\}^\prime\cdot\widetilde\psi
\]
by Zariski density of the image of $\Gamma$. All the remaining properties, in particular the Jacobi identity, are proven in the same fashion.
\end{proof}
In applications one may use identity \eqref{poisred} to compute the ``structure functions" of the Poisson bracket on $Y$ and then employ identity \eqref{poisdef}, mutatis mutandis. We note that Proposition \ref{poiredprop} provides a natural construction of Poisson varieties if one starts with a Hamiltonian vector field on $\mathbb R^{2n}$ (usual symplectic structure), and a linear (algebraic) group $G$ which leaves the standard Poisson bracket invariant: Reduction by invariants of $G$ (assuming finite generation of the invariant algebra) will lead to a Poisson variety. However, Poisson varieties occur in different ways, such as the smooth Poisson variety $V\supseteq\mathbb H^2$ which is in the focus of the present paper.

\medskip
\noindent
Finally, we note some observations about extending the class of admissible functions.
\begin{remark}
If one starts with a (local) analytic function or a $C^\infty$ function
$\widetilde \mu$ in $r$ variables, defined in a neighborhood of some $y\in Y$, then one may consider $\mu:=\widetilde\mu\circ\Gamma$ as a (local) analytic or $C^\infty$ function on $V$ (this is consistent with \cite{Cushman}, Appendix B.5), and the Hamiltonian vector field of $\widetilde \mu$ on $Y$ (defined via extension of the Poisson bracket \eqref{poisdef}) is the reduced vector field of the Hamiltonian of $\mu$ on $V$.  In this sense, the reduction procedure works for more general functions and vector fields.\\
The critical question, however, is whether every $G$-invariant function of some differentiability class on $V$ can be expressed as a function (of the same class) in the $\gamma_i$.  For compact groups $G$ and $C^\infty$ functions this holds due to a theorem by Schwarz \cite{Schwarz}, and
furthermore one can verify that the reduction we have obtained is equivalent to the singular reduction described in Cushman and Bates \cite{Cushman}. (Essentially this follows from the identification of the orbit space with a semi-algebraic subvariety of $Y$.)
On the other hand, there exist group actions with smooth invariants that cannot be expressed as smooth functions composed with invariant polynomials. Bates \cite{Bates} provides an example from mechanics, with a unipotent symmetry group.
For reductive groups every analytic $G$-invariant function can be written as an analytic function in the $\gamma_i$, as was shown by  Luna \cite{Luna}. 
\end{remark}
\proof[Acknowledgements]

M.S. was partially supported by an NSERC Discover grant, by the DFG-Graduiertenkolleg ``Experimentelle und konstruktive Algebra" during a visit to RWTH Aachen, as well as by the Department of Mathematics during a visit to TU M\"unchen.  J.S. and S.W. gratefully acknowledge the support of the Research in Pairs program of Mathematisches Forschungsinstitut Oberwolfach (MFO) in 2012.

\end{document}